\documentclass[11pt]{amsart}
\usepackage[T1]{fontenc}
\usepackage[utf8]{inputenc}
\usepackage[
  backend=biber,
  style=numeric,
  sorting=none,
  giveninits=false,
  doi=true,
  isbn=true,
  url=true,
  eprint=true
]{biblatex}
\usepackage{lmodern}
\usepackage{microtype}
\usepackage{tabularx}
\usepackage{array}
\usepackage{amsmath,amssymb,amsfonts,amsthm}
\usepackage{mathtools}
\usepackage{enumitem}
\usepackage{tikz-cd}
\usepackage{geometry}
\usepackage{float}

\usepackage[colorlinks=true,linkcolor=blue,citecolor=blue,urlcolor=blue]{hyperref}
\usepackage[nameinlink,noabbrev]{cleveref}

\hypersetup{
  pdftitle={From Finite-Node Conifold Geometry to BPS Structures III},
  pdfauthor={Abdul Rahman}
}

\numberwithin{equation}{section}

\theoremstyle{plain}
\newtheorem{theorem}{Theorem}[section]
\newtheorem{proposition}[theorem]{Proposition}

\newtheorem{corollary}[theorem]{Corollary}

\newtheorem{hypothesis}[theorem]{Hypothesis}
\theoremstyle{definition}
\newtheorem{definition}[theorem]{Definition}
\newtheorem{example}[theorem]{Example}
\newtheorem{convention}[theorem]{Convention}

\theoremstyle{remark}
\newtheorem{remark}{Remark}[section]

\makeatletter



\makeatother

\newcommand{\Q}{\mathbb{Q}}

\newcommand{\RHom}{\mathbf{R}\!\operatorname{Hom}}

\newcommand{\Perv}{\mathrm{Perv}}
\newcommand{\Hom}{\mathrm{Hom}}
\newcommand{\Ext}{\mathrm{Ext}}

\title[From Finite-Node Conifold Geometry to BPS Structures III]
{From Finite-Node Conifold Geometry to BPS Structures III: Mediated Triangle Transport and Graded Interaction Data}
\author{Abdul Rahman}
\thanks{Email: arahman@alum.howard.edu}
\subjclass[2020]{14D06, 32S30, 18G80, 18G35, 14F08, 14F10}
\keywords{Conifold degenerations, perverse sheaves, mixed Hodge modules, perverse schobers, triangulated categories, derived profunctors, quiver data, BPS structures} % edit

\begin{document}

\begin{abstract}
In previous work, we extracted from a finite-node conifold degeneration the state-data package $A_\Sigma=(V_\Sigma,E_\Sigma,c_\Sigma)$ and then constructed the support-level interaction package encoded by a binary incidence structure and finite quiver-theoretic skeleton \cite{RahmanQuiverDataI,RahmanQuiverDataII}. The present paper introduces the next layer: a graded pairwise interaction package refining binary support. Since the support matrix records where a mediated channel is present, but not its derived size, cohomological degree, or exact-triangle behavior, we introduce \emph{mediated triangle transport} (MTT). An MTT datum combines bulk-mediated schober transport, localized probes, corrected-extension shadow compatibility, and derived interaction profunctors. For each ordered pair $(i,j)$, it produces $\mathbb T_{ij}(X,Y):=\RHom_{\mathcal C_{p_j}}(\Psi_j\Phi_i(X),Y)$ and the probe interaction complex $\mathsf H_{ij}:=\mathbb T_{ij}(L_i,L_j)=\RHom_{\mathcal C_{p_j}}(\Psi_j\Phi_i(L_i),L_j)$. We prove exactness and long exact interaction sequences, isolate a triangle-visible nonvanishing criterion, and formulate a conditional bridge theorem showing that supported channels yield nontrivial pairwise interaction complexes under the stated probe, content, and detector hypotheses. Under a bounded Hom-finite convention, the cohomology of $\mathsf H_{ij}$ defines $P_{ij}(q)=\sum_m \dim H^m(\mathsf H_{ij})q^m$, and these polynomials assemble into $I_\Sigma^{\mathrm{gr}}$. Thus $(A_\Sigma,I_\Sigma^{(0/1)},I_\Sigma^{\mathrm{gr}})$ provides the first graded interaction input for later stability, BPS, and wall-crossing theory.
\end{abstract}
\maketitle
\tableofcontents

%----------------------------------------------------
\section{Introduction}

Let
\[
\pi:X\to\Delta
\]
be a one-parameter degeneration whose central fiber
\[
X_0:=\pi^{-1}(0)
\]
has singular locus
\[
\Sigma=\{p_1,\dots,p_r\}\subset X_0
\]
consisting of finitely many ordinary double points.  The finite-node setting naturally separates the geometry into a common bulk sector and finitely many localized node sectors.  In the present program this bulk/localized structure has been extracted in three compatible forms: the corrected perverse extension \cite{RahmanPerverseNearbyCycles}, its mixed-Hodge-module refinement \cite{RahmanMixedHodgeModules}, and the finite-node schober package \cite{RahmanSchoberPaper,RahmanMultiNodeSchoberPaper}.

The present paper continues the sequence
\[
\text{corrected extension}
\longrightarrow
A_\Sigma
\longrightarrow
I_\Sigma^{(0/1)}
\longrightarrow
I_\Sigma^{\mathrm{gr}}.
\]
The first finite algebraic layer,
\[
A_\Sigma=(V_\Sigma,E_\Sigma,c_\Sigma),
\]
was constructed in \cite{RahmanQuiverDataI}.  The next layer, the support-level incidence law and finite quiver-theoretic skeleton, was constructed in \cite{RahmanQuiverDataII}.  This paper constructs the first graded refinement of that support law.

The need for this refinement is structural.  The binary matrix \(I_\Sigma^{(0/1)}\) records whether a mediated channel is present, but it does not record how the channel is realized in a derived category, in which cohomological degrees it contributes, or how localized information is transported through the bulk.  For later stability, BPS, and wall-crossing applications, support is therefore too coarse: one needs a pairwise interaction object before one can ask for stability data or BPS indices.

This distinguishes the present construction from approaches in which one begins with an already available quiver, charge lattice, stability condition, or BPS sector.  In the existing quiver and BPS literature, such data support rich wall-crossing and mutation formalisms \cite{SeidelThomas,AlimCecottiCordovaEspahbodiRastogiVafa_BPSQuivers,AlimCecottiCordovaEspahbodiRastogiVafa_N2Quivers,Denef_QuantumQuivers,Cecotti_QuiverBPS}.  Here the question is earlier: starting from the finite-node conifold geometry, the corrected-extension package, and the associated schober transport data, can one extract the first derived pairwise interaction package that a later BPS theory may inherit?

The answer proposed here is \emph{mediated triangle transport}.  For each ordered pair \((i,j)\), the finite-node schober datum supplies exact functors
\[
\Phi_i:\mathcal C_{p_i}\to\mathcal C_{\mathrm{bulk}},
\qquad
\Psi_j:\mathcal C_{\mathrm{bulk}}\to\mathcal C_{p_j},
\]
and hence a bulk-mediated composite
\[
\Psi_j\Phi_i:\mathcal C_{p_i}\to\mathcal C_{p_j}.
\]
Given localized probes \(L_i\in\mathcal C_{p_i}\) and \(L_j\in\mathcal C_{p_j}\), we define the pairwise interaction complex
\[
\mathsf H_{ij}
:=
\RHom_{\mathcal C_{p_j}}(\Psi_j\Phi_i(L_i),L_j).
\]
Thus support-level incidence is refined not by assigning scalar weights by hand, but by producing derived interaction objects from the bulk/localized transport structure.

The paper remains a bridge paper.  It does not construct BPS indices, central charges, stability conditions, or wall-crossing automorphisms.  Its output is the graded interaction package that such later structures may consume.

\subsection{Input from previous work}

We use three inputs.

First, the corrected finite-node extension package identifies the point-supported correction terms attached to the nodes \cite{RahmanPerverseNearbyCycles}.  Its mixed-Hodge-module refinement lifts the same architecture to the Hodge-theoretic setting and retains the finite localized quotient structure \cite{RahmanMixedHodgeModules}.

Second, the finite-node schober formalism supplies a bulk category
\[
\mathcal C_{\mathrm{bulk}},
\]
localized categories
\[
\mathcal C_{p_i},
\qquad 1\le i\le r,
\]
and exact attachment functors
\[
\Phi_i:\mathcal C_{p_i}\to\mathcal C_{\mathrm{bulk}},
\qquad
\Psi_i:\mathcal C_{\mathrm{bulk}}\to\mathcal C_{p_i},
\]
compatible with the corrected perverse shadow \cite{RahmanSchoberPaper,RahmanMultiNodeSchoberPaper}.

Third, the first two papers of this sequence provide the finite algebraic and support-level data. The paper  \cite{RahmanQuiverDataI} constructs
\[
A_\Sigma=(V_\Sigma,E_\Sigma,c_\Sigma),
\]
where
\[
V_\Sigma=\{v_1,\dots,v_r\},\qquad
E_\Sigma=\Ext^1_{\Perv(X_0;\Q)}(Q_\Sigma,IC_{X_0}),
\]
and \(c_\Sigma\in\Q^r\) records the corrected global class in the nodewise basis \cite{RahmanQuiverDataI}. The paper  \cite{RahmanQuiverDataII} constructs the binary support law
\[
I_\Sigma^{(0/1)}
\]
and the associated finite quiver-theoretic skeleton \cite{RahmanQuiverDataII}.  The present paper takes these structures as fixed input.

\subsection{Related work}

The perverse-sheaf background is rooted in the foundational theory of Beilinson--Bernstein--Deligne \cite{BBD} and in later descriptions of perverse objects and their local algebraic models \cite{MacPhersonVilonen1986,GMV1996,KS,DimcaSheaves,Schurmann}.  The Hodge-theoretic background includes the theory of degenerations and limiting mixed Hodge structures \cite{Schmid,SteenbrinkLimits,DeligneDegeneration,VoisinHodgeTheoryI,VoisinHodgeTheoryII}, together with Saito's mixed-Hodge-module formalism \cite{SaitoMHM,SaitoDuality,SaitoYoungGuide}.  The conifold setting itself belongs to the classical study of isolated hypersurface singularities and Calabi--Yau degenerations \cite{MilnorSingularPoints,ClemensDegeneration,StromingerConifold,GreeneMorrisonStrominger,CollinsConifoldIntro}.

The categorical side is motivated by the theory of perverse schobers and categorical transport \cite{KapranovSchechtman,AnnoLogvinenko,KatzarkovPanditSpaide,KosekiOuchi}.  In this paper, the relevant schober input is the finite-node bulk/localized transport package of \cite{RahmanSchoberPaper,RahmanMultiNodeSchoberPaper}, whose shadow is compatible with the corrected perverse and mixed-Hodge-module extension packages of \cite{RahmanPerverseNearbyCycles,RahmanMixedHodgeModules}.

The quiver, BPS, and wall-crossing literature shows that once suitable quiver, charge, stability, or BPS data are available, one can formulate nontrivial chamber and wall-crossing structures \cite{SeidelThomas,AlimCecottiCordovaEspahbodiRastogiVafa_BPSQuivers,AlimCecottiCordovaEspahbodiRastogiVafa_N2Quivers,Denef_QuantumQuivers,Cecotti_QuiverBPS}.  The present work addresses the preceding extraction problem: how much pairwise interaction data can be obtained before choosing a stability condition or postulating a BPS sector?

\subsection{From support to mediated triangle transport}

The support-level package of \cite{RahmanQuiverDataII} records where interaction is allowed.  The present paper replaces support-only data by derived pairwise interaction objects:
\[
\text{support}
\longrightarrow
\text{derived pairwise interaction}
\longrightarrow
\text{graded interaction shadow}.
\]

For each ordered pair \((i,j)\), define
\[
\mathbb T_{ij}(X,Y)
:=
\RHom_{\mathcal C_{p_j}}(\Psi_j\Phi_i(X),Y),
\qquad
X\in\mathcal C_{p_i},\quad Y\in\mathcal C_{p_j}.
\]
This is contravariant in \(X\) and covariant in \(Y\), hence has the form of a derived interaction profunctor
\[
\mathbb T_{ij}:
\mathcal C_{p_i}^{op}\times \mathcal C_{p_j}
\to
D(\mathrm{Vect})
\]
in the sense of \cite{BenabouProfunctors}.  Evaluating on localized probes gives
\[
\mathsf H_{ij}
:=
\mathbb T_{ij}(L_i,L_j)
=
\RHom_{\mathcal C_{p_j}}(\Psi_j\Phi_i(L_i),L_j).
\]
These complexes are the basic pairwise interaction objects of the paper.

\begin{remark}
A profunctor from \(\mathcal A\) to \(\mathcal B\) is a bifunctor
\[
\mathcal A^{op}\times\mathcal B\to\mathcal V
\]
for a target category \(\mathcal V\) \cite{BenabouProfunctors,LoregianCoendCalculus}.  Here \(\mathcal V=D(\mathrm{Vect})\), and the derived Hom construction above supplies the required bifunctor.  Appendix~\ref{app:profunctors} recalls only the profunctorial facts used in the paper.
\end{remark}

\subsection{Finiteness convention}

The graded decategorification step requires a mild finiteness convention.  Throughout the parts of the paper where \(P_{ij}(q)\) is defined, we assume that the relevant localized categories are \(k\)-linear and Hom-finite on the probe-generated objects under consideration.  In particular, the complexes
\[
\mathsf H_{ij}
=
\RHom_{\mathcal C_{p_j}}(\Psi_j\Phi_i(L_i),L_j)
\]
are assumed to have finite-dimensional cohomology and to be bounded, or else locally finite.  In the bounded Hom-finite case,
\[
P_{ij}(q)=\sum_m \dim H^m(\mathsf H_{ij})q^m
\]
is a Laurent polynomial.  In the locally finite case, the same expression is interpreted as a formal graded Poincare series.

\subsection{Main theorem stack}

We now summarize the results proved in the paper.

\begin{theorem}[obstruction to the naive extension-only bridge]
\label{thm:intro-obstruction}
Let
\[
Q_\Sigma=\bigoplus_{k=1}^r Q_k,
\qquad
Q_k=i_{k*}\Q_{\{p_k\}},
\]
be the corrected finite-node quotient on the shadow side.  The extension-only route built from the localized objects \(Q_k\) and their nodewise extension data is intrinsically local.  It does not, by itself, produce the bulk-mediated off-diagonal pairwise interaction layer needed for later stability or BPS theory.
\end{theorem}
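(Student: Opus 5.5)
The plan is to make the statement precise as a computation about the skyscraper objects $Q_k$ and their extension data inside $\Perv(X_0;\Q)$, and then show that every pairwise quantity one can form from them alone is diagonal, i.e. it sees only the index $k$ and never a pair $(i,j)$ with $i\neq j$. Concretely, by ``extension-only route'' I will mean the functorial data generated by the $Q_k$, the groups $\Ext^\bullet_{\Perv(X_0;\Q)}(Q_k,IC_{X_0})$ that assemble into $E_\Sigma$, and the pairwise groups $\Ext^\bullet(Q_i,Q_j)$ and $\RHom(Q_i,Q_j)$ in $D^b_c(X_0;\Q)$. The claim to be proved is then: for $i\neq j$ all these pairwise invariants vanish, and the nodewise data decomposes as a direct sum over $k$. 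This shows that no nonzero off-diagonal interaction complex can be produced from that input.

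\textbf{Vanishing of the off-diagonal pairwise data.} The key step uses disjointness of supports. Since $p_i\neq p_j$, adjunction gives
\[
\RHom(i_{i*}\Q_{\{p_i\}},i_{j*}\Q_{\{p_j\}})\cong \RHom(\Q_{\{p_i\}},i_i^{!}i_{j*}\Q_{\{p_j\}}).
\]
Base change for the closed embeddings of the two disjoint points gives $i_i^! i_{j*}=0$. So $\Ext^m(Q_i,Q_j)=0$ for all $m$ when $i\neq j$, while $\Ext^\bullet(Q_k,Q_k)=\Q$ in degree $0$. The endomorphism algebra of $Q_\Sigma$ is therefore $\Q^r$, a product of copies of $\Q$. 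Likewise
\[
E_\Sigma=\Ext^1(Q_\Sigma,IC_{X_0})=\bigoplus_k \Ext^1(Q_k,IC_{X_0}).
\]
Each summand is computed by $i_k^!IC_{X_0}$, which depends only on the local link at $p_k$. So the extension classes, and hence $c_\Sigma$, are nodewise data indexed by $V_\Sigma$, exactly as recorded in \cite{RahmanQuiverDataI}.

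\textbf{From vanishing to the obstruction.} Next I show that any bilinear pairing or composite built from these data factors through the diagonal. Concretely, a natural transformation or Yoneda product relating node $i$ to node $j$ must pass through $\Hom^\bullet(Q_i,Q_j)$ or through a product in $\Ext^\bullet(Q_\Sigma,IC_{X_0})$. The first vanishes, and the second is block diagonal, because the extension $0\to IC_{X_0}\to \mathcal E\to Q_\Sigma\to0$ splits as a Baer sum of nodewise extensions. Hence any $r\times r$ matrix of complexes obtained this way is diagonal. Comparing with \cite{RahmanQuiverDataII}, where $I_\Sigma^{(0/1)}$ can have nonzero off-diagonal entries coming from bulk mediation, the extension-only route cannot reproduce it. The bulk-mediated composite $\Psi_j\Phi_i$ through $\mathcal C_{\mathrm{bulk}}$ is precisely the missing ingredient.

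\textbf{Main obstacle.} The hard part is making ``does not, by itself, produce'' rigorous, since it quantifies over all constructions. I would first try to fix a closed class of operations: finite direct sums, cones, and Yoneda products inside the triangulated hull of $\{Q_k\}\cup\{IC_{X_0}\}$, restricted to its $Q$-part. I would then show this hull is orthogonally decomposed as a product of nodewise subcategories, which reduces everything to the vanishing above. One subtlety remains: $IC_{X_0}$ itself is global, so Yoneda composites $Q_i\to IC_{X_0}[1]\to\cdots\to Q_j$ could in principle link nodes. I would handle this by showing that such composites land in $\Ext^\bullet(Q_i,Q_j)=0$. If needed, I would explicitly state that any off-diagonal contribution requires transport through the global object, i.e. the bulk, which is then the content of the obstruction.
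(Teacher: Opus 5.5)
\textbf{There is a genuine gap: your plan rests on the claim that every pairwise quantity built from the extension data is diagonal, and that claim is false once $IC_{X_0}$ is admitted.}

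Your base-change computation is correct, and it is sharper than anything in the paper. For $i\neq j$, $i_i^!i_{j*}=0$ gives $\RHom(Q_i,Q_j)=0$. So the paper's own hypothetical secondary classes $\beta_{ij}\in\Ext^2(Q_i,Q_j)$ are identically zero off the diagonal. Likewise $E_\Sigma=\bigoplus_k\Ext^1(Q_k,IC_{X_0})$, with summands $H^1(i_k^!IC_{X_0})$, is genuinely nodewise. The paper's proof attempts no vanishing theorem. It argues qualitatively that the extension package consists of point-supported objects and classes $\delta_i$ recording how each $Q_i$ couples to $IC_{X_0}$, and that it contains no exact process carrying the $i$-th sector into the $j$-th. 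The problem is your upgrade of this to a universal diagonality statement.

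Only the thick subcategory generated by the $Q_k$ decomposes orthogonally; the triangulated hull of $\{Q_k\}\cup\{IC_{X_0}\}$ does not. Let $\mathcal E_i\cong\mathrm{Cone}(\delta_i)[-1]$ be the nodewise extension $IC_{X_0}\to\mathcal E_i\to Q_i\xrightarrow{\delta_i}IC_{X_0}[1]$. For $i\neq j$, orthogonality gives
\[
\RHom(\mathcal E_i,Q_j)\cong\RHom(IC_{X_0},Q_j)\cong\RHom(i_j^*IC_{X_0},\Q)\neq 0,
\]
because the stalk of $IC_{X_0}$ at $p_j$ is nonzero.

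The Baer-sum remark also does less than you need: it decomposes the class, not the object. $\mathcal P$ is the pushout of $\bigoplus_k\mathcal E_k$ along the codiagonal $IC_{X_0}^{\oplus r}\to IC_{X_0}$, so all nodes are glued to one common object.

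Your proposed repair only disposes of Yoneda composites $Q_i\to\cdots\to Q_j[m]$, which lie in $\Hom(Q_i,Q_j[m])=0$. It does not address:
\begin{itemize}
\item Hom complexes between objects of the hull, such as $\RHom(\mathcal E_i,Q_j)$ above;
\item outer products such as $\Ext^1(Q_i,IC_{X_0})\otimes\Ext^1(Q_j,IC_{X_0})$, which are bilinear and nonzero;
\item Massey-type operations that start at $Q_i$, pass through $Q_j$, and end at $IC_{X_0}$.
\end{itemize}
Your fallback, that off-diagonal contributions must pass through $IC_{X_0}$, concedes that the extension data do yield nonzero off-diagonal quantities routed through a global object. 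So it cannot by itself serve as the obstruction.

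The argument can be rescued by changing what ``does not produce'' means, which is what the paper does. The missing ingredient is a directed exact transport from sector $i$ to sector $j$, not the vanishing of all off-diagonal invariants. Your computations support this formulation well:
\begin{itemize}
\item the shadow-side transport candidates $i_j^*i_{i*}$ and $i_j^!i_{i*}$ vanish for $i\neq j$;
\item $\RHom(Q_i,Q_j)=0$;
\item the surviving off-diagonal complexes, such as $\RHom(\mathcal E_i,Q_j)$, are independent of $i$. They are nodewise data at $p_j$ seen through $IC_{X_0}$, not a channel from $i$ to $j$.
\end{itemize}
Finally, your comparison step assumes $I_\Sigma^{(0/1)}$ has nonzero off-diagonal entries, which this paper never establishes.
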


\begin{definition}[mediated triangle transport datum]
\label{def:intro-mtt}
A mediated triangle transport datum consists of:
\begin{enumerate}
\item a bulk triangulated category \(\mathcal C_{\mathrm{bulk}}\);
\item localized triangulated categories \(\mathcal C_{p_i}\), \(1\le i\le r\);
\item exact functors
\[
\Phi_i:\mathcal C_{p_i}\to\mathcal C_{\mathrm{bulk}},
\qquad
\Psi_i:\mathcal C_{\mathrm{bulk}}\to\mathcal C_{p_i};
\]
\item localized probes \(L_i\in\mathcal C_{p_i}\);
\item compatibility with the corrected-extension shadow package of \cite{RahmanPerverseNearbyCycles,RahmanMixedHodgeModules};
\item derived interaction profunctors
\[
\mathbb T_{ij}(X,Y)
:=
\RHom_{\mathcal C_{p_j}}(\Psi_j\Phi_i(X),Y).
\]
\end{enumerate}
\end{definition}

The corresponding probe evaluation is
\[
\mathsf H_{ij}
:=
\mathbb T_{ij}(L_i,L_j)
=
\RHom_{\mathcal C_{p_j}}(\Psi_j\Phi_i(L_i),L_j).
\]

\begin{theorem}[exactness of mediated triangle transport]
\label{thm:intro-exactness}
Fix \((i,j)\).  The interaction profunctor
\[
\mathbb T_{ij}:
\mathcal C_{p_i}^{op}\times\mathcal C_{p_j}
\to
D(\mathrm{Vect})
\]
is exact in the source variable in the triangulated sense.  In particular, if
\[
X'\to X\to X''\to X'[1]
\]
is a distinguished triangle in \(\mathcal C_{p_i}\), then applying \(\Psi_j\Phi_i\) and then
\[
\RHom_{\mathcal C_{p_j}}(-,Y)
\]
produces a distinguished triangle in \(D(\mathrm{Vect})\), hence a long exact cohomology sequence for the interaction complexes \(\mathbb T_{ij}(X,Y)\).
\end{theorem}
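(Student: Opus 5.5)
The plan is to factor $\mathbb T_{ij}(-,Y)$ as a composite of two triangulated operations and check that each one preserves distinguished triangles. The first is the exact functor $\Psi_j\Phi_i:\mathcal C_{p_i}\to\mathcal C_{p_j}$. The second is the contravariant derived Hom functor $\RHom_{\mathcal C_{p_j}}(-,Y):\mathcal C_{p_j}^{op}\to D(\mathrm{Vect})$.

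\textbf{Step 1: the transport functor.} By item (3) of Definition~\ref{def:intro-mtt}, $\Phi_i$ and $\Psi_j$ are exact functors of triangulated categories. A composite of exact functors is exact: the commutation isomorphisms with the shift compose, and distinguished triangles are preserved at each stage. So a distinguished triangle $X'\to X\to X''\to X'[1]$ in $\mathcal C_{p_i}$ is sent to a distinguished triangle
\[
\Psi_j\Phi_i X'\to\Psi_j\Phi_i X\to\Psi_j\Phi_i X''\to\Psi_j\Phi_i X'[1]
\]
in $\mathcal C_{p_j}$.

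\textbf{Step 2: the derived Hom.} Next I would show that $\RHom_{\mathcal C_{p_j}}(-,Y)$ sends distinguished triangles to distinguished triangles, with the arrows reversed and the usual shift. This is the step where the meaning of $\RHom$ must be fixed, and I expect it to be the main obstacle. A bare triangulated category only yields the Hom groups $\Hom(-,Y[m])$, not a complex in $D(\mathrm{Vect})$. I would therefore make explicit what the phrase ``bulk/localized schober package'' tacitly supplies. I would assume that $\mathcal C_{p_j}$ is the homotopy category of a pretriangulated dg (or stable $k$-linear) enhancement, and that $\Phi_i$ and $\Psi_j$ lift to dg functors. Then $\RHom(-,Y)$ is the dg Hom into $Y$. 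The resulting functor $\mathcal C_{p_j}^{op}\to D(\mathrm{Vect})$ takes cones to cocones: for a pretriangulated category, the mapping complex out of a cone is the shifted cone of the induced map of mapping complexes. This gives the distinguished triangle
\[
\RHom(\Psi_j\Phi_i X'',Y)\to\RHom(\Psi_j\Phi_i X,Y)\to\RHom(\Psi_j\Phi_i X',Y)\to\RHom(\Psi_j\Phi_i X'',Y)[1].
\]
If one prefers to avoid an enhancement, there is a fallback that already gives the long exact sequence, though not the triangle. Set $H^m\mathbb T_{ij}(X,Y):=\Hom_{\mathcal C_{p_j}}(\Psi_j\Phi_i X,Y[m])$. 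Then the statement follows from the standard fact that $\Hom(-,Y)$ is a cohomological functor on any triangulated category.

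\textbf{Step 3: the long exact sequence.} Taking cohomology of the triangle from Step 2 in $D(\mathrm{Vect})$ gives
\[
\cdots\to H^m\mathbb T_{ij}(X'',Y)\to H^m\mathbb T_{ij}(X,Y)\to H^m\mathbb T_{ij}(X',Y)\to H^{m+1}\mathbb T_{ij}(X'',Y)\to\cdots
\]
Finally I would note that everything is natural in $Y$, because post-composition commutes with all the maps above. Hence these sequences are compatible with the profunctorial structure recalled in Appendix~\ref{app:profunctors}.
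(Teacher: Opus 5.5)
Your proposal is correct and follows the paper's route exactly: exactness of the composite $\Psi_j\Phi_i$ (Proposition~\ref{prop:exactness-bulk-mediated-transport}, Corollary~\ref{cor:transport-distinguished-triangles}), then the contravariant $\RHom_{\mathcal C_{p_j}}(-,Y)$ producing the reversed triangle in $D(\mathrm{Vect})$ (Proposition~\ref{prop:mtt-exactness-source-variable}), then cohomology (Theorem~\ref{thm:long-exact-interaction-sequence}). Your enhancement argument and the $\Hom(-,Y[m])$ fallback make explicit what the paper absorbs into Convention~\ref{conv:mtt-finiteness}; note that only $\mathcal C_{p_j}$ needs an enhancement, not dg lifts of $\Phi_i,\Psi_j$, because every distinguished triangle in $\mathcal C_{p_j}$ is isomorphic to a cone triangle.
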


\begin{definition}[triangle-visible localized content]
\label{def:intro-triangle-visible}
Let \(X\in\mathcal C_{p_j}\).  We say that \(X\) is \emph{right triangle-visible} with respect to \(L_j\) if there exists a distinguished triangle
\[
B\to X\xrightarrow{u}L_j\to B[1]
\]
with \(u\neq 0\).  We say that \(X\) is \emph{left triangle-visible} with respect to \(L_j\) if there exists a distinguished triangle
\[
L_j\xrightarrow{v}X\to C\to L_j[1]
\]
with \(v\neq 0\).
\end{definition}

\begin{proposition}[triangle-visible nonvanishing]
\label{prop:intro-triangle-visible}
Let \(X\in\mathcal C_{p_j}\).  If \(X\) is right triangle-visible with respect to \(L_j\), then
\[
\RHom_{\mathcal C_{p_j}}(X,L_j)\not\simeq 0.
\]
If \(X\) is left triangle-visible with respect to \(L_j\), then
\[
\RHom_{\mathcal C_{p_j}}(L_j,X)\not\simeq 0.
\]
In particular, if
\[
A_{ij}:=\Psi_j\Phi_i(L_i)
\]
is right triangle-visible with respect to \(L_j\), then
\[
\mathsf H_{ij}
=
\RHom_{\mathcal C_{p_j}}(A_{ij},L_j)
\not\simeq 0.
\]
\end{proposition}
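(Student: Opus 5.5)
The plan is to read nonvanishing of the derived Hom complex off its degree-zero cohomology. In \(D(\mathrm{Vect})\) a complex is zero exactly when all its cohomology groups vanish, and for objects \(X,Y\) of a triangulated category we have
\[
H^0\bigl(\RHom_{\mathcal C_{p_j}}(X,Y)\bigr)\cong \Hom_{\mathcal C_{p_j}}(X,Y).
\]
This is the standing convention behind the notation \(\RHom\) on the enhanced categories of Definition~\ref{def:intro-mtt}, and it is what makes the long exact sequences of Theorem~\ref{thm:intro-exactness} meaningful. So it suffices to exhibit one nonzero morphism in the relevant Hom group.

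For the right triangle-visible case, suppose \(B\to X\xrightarrow{u}L_j\to B[1]\) is distinguished with \(u\neq 0\). Then \(u\) is a nonzero element of \(\Hom_{\mathcal C_{p_j}}(X,L_j)=H^0\RHom_{\mathcal C_{p_j}}(X,L_j)\), so \(\RHom_{\mathcal C_{p_j}}(X,L_j)\not\simeq 0\).

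For the left triangle-visible case, suppose \(L_j\xrightarrow{v}X\to C\to L_j[1]\) is distinguished with \(v\neq 0\). Then \(v\) is a nonzero element of \(\Hom(L_j,X)=H^0\RHom(L_j,X)\), which gives \(\RHom(L_j,X)\not\simeq 0\).

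The final assertion is the right-visible case applied to \(X=A_{ij}=\Psi_j\Phi_i(L_i)\), which is an object of \(\mathcal C_{p_j}\). Its conclusion is exactly \(\mathsf H_{ij}=\mathbb T_{ij}(L_i,L_j)\not\simeq 0\).

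The only step needing care is the identification of \(H^0\) of \(\RHom\) with the triangulated Hom. I would state it explicitly as part of the convention: either the categories are dg- or \(\infty\)-enhanced and \(\RHom\) is the mapping complex, or, absent an enhancement, \(\RHom(X,Y)\) is interpreted as the graded vector space \(\bigoplus_m\Hom(X,Y[m])[-m]\). In either reading the degree-zero piece is \(\Hom(X,Y)\), and the argument goes through unchanged.

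The triangle hypothesis itself plays no further role beyond supplying the nonzero map. This is worth remarking, because it shows that triangle-visibility is simply a convenient packaging of the existence of a nonzero morphism.
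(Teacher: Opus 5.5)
Your proposal is correct and follows the paper's proof: the paper also treats \(u\neq 0\) (resp.\ \(v\neq 0\)) as a nonzero element of \(\Hom_{\mathcal C_{p_j}}(X,L_j)=H^0\bigl(\RHom_{\mathcal C_{p_j}}(X,L_j)\bigr)\) (resp.\ of \(H^0\bigl(\RHom_{\mathcal C_{p_j}}(L_j,X)\bigr)\)), and gets the assertion about \(\mathsf H_{ij}\) by specializing to \(X=\Psi_j\Phi_i(L_i)\). The only difference is that you state explicitly the convention identifying \(H^0\) of \(\RHom\) with the triangulated Hom, which the paper uses without comment.
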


\begin{theorem}[conditional bridge theorem]
\label{thm:intro-bridge}
Assume the probe, content, and detector hypotheses formulated in Section~\ref{sec:probe-content-detector}.  If an ordered pair \((i,j)\) is supported, meaning
\[
I_\Sigma^{(0/1)}(i,j)=1,
\]
then the associated pairwise interaction complex
\[
\mathsf H_{ij}
=
\RHom_{\mathcal C_{p_j}}(\Psi_j\Phi_i(L_i),L_j)
\]
is nonzero.
\end{theorem}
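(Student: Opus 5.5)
The argument is a direct reduction to Proposition~\ref{prop:intro-triangle-visible}. The hypotheses of Section~\ref{sec:probe-content-detector} have not been stated yet, so I will commit to the form I expect them to take and route everything through that proposition. I expect three hypotheses.

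\textbf{Probe hypothesis.} The probe $L_j$ is a nonzero object of $\mathcal C_{p_j}$, so that $\mathrm{id}_{L_j}\neq 0$.

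\textbf{Content hypothesis.} Whenever $I_\Sigma^{(0/1)}(i,j)=1$, the transported object $A_{ij}=\Psi_j\Phi_i(L_i)$ carries localized content at $p_j$. This should be witnessed on the shadow side by compatibility with the corrected-extension package, item (5) of Definition~\ref{def:intro-mtt}, through the nodewise quotient $Q_j$ and the support law of \cite{RahmanQuiverDataII}.

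\textbf{Detector hypothesis.} Localized content at $p_j$ is detected by the probe $L_j$. Concretely, it supplies a morphism $u:A_{ij}\to L_j$ whose image under the shadow functor is the nonzero projection onto the $Q_j$-summand.

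With these in hand, the steps run in order as follows.

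\begin{enumerate}
\item Unwind the support condition $I_\Sigma^{(0/1)}(i,j)=1$. Using the definition in \cite{RahmanQuiverDataII}, this says that the bulk-mediated channel from $p_i$ to $p_j$ is present in the corrected-extension shadow.
\item Use the content hypothesis together with shadow compatibility to lift this to the categorical side. The conclusion is that the shadow of $A_{ij}$ has a nonzero $Q_j$-component.
\item Use the detector hypothesis to produce the morphism $u:A_{ij}\to L_j$. Since the shadow functor is additive, it sends zero morphisms to zero morphisms; the shadow of $u$ is nonzero, so $u\neq 0$.
\item Complete $u$ to a distinguished triangle $B\to A_{ij}\xrightarrow{u}L_j\to B[1]$ by the triangulated axioms. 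This exhibits $A_{ij}$ as right triangle-visible with respect to $L_j$ in the sense of Definition~\ref{def:intro-triangle-visible}.
\item Apply Proposition~\ref{prop:intro-triangle-visible}, which gives $\mathsf H_{ij}=\RHom_{\mathcal C_{p_j}}(A_{ij},L_j)\not\simeq 0$.
\end{enumerate}

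Step 5 is also immediate without the proposition: $0\neq u$ lies in $H^0(\mathsf H_{ij})=\Hom(A_{ij},L_j)$.

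The main obstacle is step 3. Step 2 only yields nonvanishing of a shadow-level class; one needs a genuine nonzero morphism to $L_j$ in $\mathcal C_{p_j}$, not merely nonzero content somewhere in $A_{ij}$. The transfer from shadow to category is exactly where the detector hypothesis must do its work, so I would first check whether Section~\ref{sec:probe-content-detector} phrases detection as faithfulness of the shadow functor on $\Hom(-,L_j)$. If it does, step 3 follows as above. If detection is instead phrased through morphisms $L_j\to A_{ij}$, then I would use the left-visible half of Proposition~\ref{prop:intro-triangle-visible}. That half yields $\RHom_{\mathcal C_{p_j}}(L_j,A_{ij})\not\simeq 0$, which is not $\mathsf H_{ij}$. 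To reach $\mathsf H_{ij}$ I would then need an extra ingredient, such as a Serre functor on $\mathcal C_{p_j}$ (for example in a Calabi--Yau setting) or an adjunction $\Psi_j\dashv\Phi_j$, to turn morphisms $L_j\to A_{ij}$ into morphisms $A_{ij}\to L_j$ in some degree.
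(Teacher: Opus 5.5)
Your skeleton (support $\Rightarrow$ $j$-localized shadow content $\Rightarrow$ detection by $L_j$ $\Rightarrow$ $\mathsf H_{ij}\not\simeq 0$) matches the paper's. However, step 3, which you correctly identify as the crux, relies on a form of the detector hypothesis that the paper does not make. Section~\ref{sec:probe-content-detector} phrases detection neither as faithfulness of $\operatorname{Sh}$ on $\Hom(-,L_j)$ nor via morphisms $L_j\to A_{ij}$. It states it directly as nonvanishing:
Hypothesis~\ref{hyp:weak-content} says that $I_\Sigma^{(0/1)}(i,j)=1$ forces $\RHom(\operatorname{Sh}(A_{ij}(L_i)),Q_j)\not\simeq 0$ or $\RHom(Q_j,\operatorname{Sh}(A_{ij}(L_i)))\not\simeq 0$. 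Hypothesis~\ref{hyp:weak-detector} says that any $X\in\mathcal C_{p_j}$ whose shadow is non-orthogonal to $Q_j$ in this two-sided sense has $\RHom_{\mathcal C_{p_j}}(X,L_j)\not\simeq 0$. The paper's proof is just these two hypotheses applied in turn with $X=A_{ij}(L_i)$. The support condition enters only as the input of the content hypothesis, so your step 1 unwinding of \cite{RahmanQuiverDataII} is not needed. Your version instead requires a specific morphism $u\colon A_{ij}\to L_j$ whose shadow is the projection onto a $Q_j$-summand, together with additivity of $\operatorname{Sh}$ on morphisms. The paper explicitly declines to assume that $\operatorname{Sh}(A_{ij}(L_i))$ contains $Q_j$ as a summand, and it treats $\operatorname{Sh}$ only as an unconstructed shadow mechanism. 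So step 3 as written is unsupported by the stated hypotheses.

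Steps 4--5 do not transfer to the actual hypotheses either. The conclusion $\RHom_{\mathcal C_{p_j}}(A_{ij}(L_i),L_j)\not\simeq 0$ only gives $\Hom(A_{ij}(L_i),L_j[m])\neq 0$ for some $m$, possibly $m\neq 0$. It therefore yields no nonzero degree-zero $u$, and it does not imply that $A_{ij}(L_i)$ is right triangle-visible with respect to $L_j$ in the sense of Definition~\ref{def:triangle-visibility}. This is why the paper keeps triangle-visibility (Corollary~\ref{cor:mediated-triangle-visible-nonvanishing}) as a separate, stronger sufficient criterion and routes the bridge theorem through the weaker content-and-detector hypotheses. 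The triangle detour is thus both unavailable and unnecessary, since the detector hypothesis already outputs $\mathsf H_{ij}\not\simeq 0$. Likewise, the variance problem you anticipate needs no Serre functor and no adjunction $\Psi_j\dashv\Phi_j$. The detector hypothesis is built to accept non-orthogonality in either direction and return the one-sided statement matching the variance of $\mathsf H_{ij}$, so that issue is absorbed into the hypothesis rather than resolved in the proof.
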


\begin{definition}[graded interaction polynomial]
\label{def:intro-graded}
Under the finiteness convention above, define
\[
P_{ij}(q)
:=
\sum_m \dim H^m(\mathsf H_{ij})q^m.
\]
The associated graded interaction matrix is
\[
I_\Sigma^{\mathrm{gr}}
:=
(P_{ij}(q))_{1\le i,j\le r}.
\]
\end{definition}

\begin{theorem}[graded pairwise interaction package]
\label{thm:intro-graded-package}
The matrix
\[
I_\Sigma^{\mathrm{gr}}
\]
is the first graded pairwise interaction package extracted from the MTT datum.  It refines the binary support law
\[
I_\Sigma^{(0/1)}
\]
of \cite{RahmanQuiverDataII}.  Under the hypotheses of Theorem~\ref{thm:intro-bridge}, every supported channel determines a nonzero graded interaction polynomial.
\end{theorem}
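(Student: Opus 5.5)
The statement has three parts: the matrix $I_\Sigma^{\mathrm{gr}}$ is well defined and extracted from the MTT datum; it refines $I_\Sigma^{(0/1)}$; and supported channels give nonzero polynomials. I would prove them in that order, since each part leans on results stated earlier.

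\textbf{Well-definedness.} Start from Definition~\ref{def:intro-mtt}. For each ordered pair $(i,j)$ the MTT datum fixes the exact composite $\Psi_j\Phi_i$ and the probes $L_i, L_j$. Hence $\mathsf H_{ij}=\mathbb T_{ij}(L_i,L_j)$ is an object of $D(\mathrm{Vect})$, determined up to quasi-isomorphism by the datum alone. Its cohomology $H^m(\mathsf H_{ij})$ is a quasi-isomorphism invariant. Under the bounded Hom-finite convention, each $\dim H^m(\mathsf H_{ij})$ is finite and only finitely many $m$ contribute. So $P_{ij}(q)$ is a well-defined Laurent polynomial in $\mathbb Z_{\ge 0}[q,q^{-1}]$, and in the locally finite case a formal series. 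This shows $I_\Sigma^{\mathrm{gr}}$ depends only on the MTT datum; no weights are chosen by hand. Theorem~\ref{thm:intro-exactness} adds functoriality: replacing probes along distinguished triangles changes $P_{ij}$ in a controlled way through the long exact sequence. That is what makes this package "graded interaction data" rather than an arbitrary matrix.

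\textbf{Refinement and the key fact.} Define the decategorification map $P\mapsto \mathbf 1[P\neq 0]$ from matrices of polynomials to $\{0,1\}$-matrices. The key elementary fact is that, over a field, a bounded complex is zero in $D(\mathrm{Vect})$ if and only if all its cohomology vanishes. Therefore
\[
P_{ij}(q)\neq 0 \iff \mathsf H_{ij}\not\simeq 0 .
\]
So the support of $I_\Sigma^{\mathrm{gr}}$ is exactly the set of pairs with nonzero interaction complex. Evaluating at $q=1$ gives the total dimension $\sum_m \dim H^m(\mathsf H_{ij})$. This makes "refines" precise: $I_\Sigma^{(0/1)}$ is dominated entrywise by the support of $I_\Sigma^{\mathrm{gr}}$.

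\textbf{Supported channels.} Now invoke Theorem~\ref{thm:intro-bridge}. If $I_\Sigma^{(0/1)}(i,j)=1$, then $\mathsf H_{ij}\neq 0$, hence $P_{ij}(q)\neq 0$ by the equivalence above. Where one needs a checkable criterion, Proposition~\ref{prop:intro-triangle-visible} supplies it: right triangle-visibility of $A_{ij}$ forces nonvanishing.

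\textbf{Main obstacle.} The delicate point is the converse direction of "refines". The supports may not coincide: unsupported pairs could in principle carry nonzero $\mathsf H_{ij}$. Closing that would require a vanishing statement for unsupported pairs, for instance that $\Psi_j\Phi_i$ kills $L_i$ when the incidence law vanishes. I do not expect this from the stated hypotheses. I would therefore state refinement as the one-sided entrywise domination plus equality of support on supported pairs, and prove that version. Full support equality would become a further hypothesis on the detector side.
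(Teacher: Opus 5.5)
Your proposal is correct and follows essentially the paper's argument. In the paper, the first two assertions are definitional: refinement simply means that each binary entry is replaced by the graded cohomological shadow of $\mathsf H_{ij}$. The third assertion is the bridge theorem combined with the fact that a complex that is nonzero in $D(\mathrm{Vect})$, with finite-dimensional cohomology, has some $H^m\neq 0$. The one difference is that your precise reading of ``refines'' as entrywise domination of supports only holds under the bridge hypotheses, whereas the paper asserts refinement unconditionally in this weaker structural sense and uses the hypotheses only for the nonvanishing clause.
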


\begin{corollary}[inherited finite package]
\label{cor:intro-inherited}
The package
\[
(A_\Sigma,I_\Sigma^{(0/1)},I_\Sigma^{\mathrm{gr}})
\]
is the finite inherited algebraic output of the present paper.  It is the input passed to the later stability, BPS, and wall-crossing stage.
\end{corollary}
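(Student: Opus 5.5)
The statement is essentially an assembly result: it packages what has already been produced rather than proving a new nonvanishing fact. I would therefore prove it by checking, one by one, that the three components are well-defined finite objects, that they are compatible in the sense needed, and that each is determined by the given data. The argument is organizational, and it rests on Theorem~\ref{thm:intro-graded-package}, Theorem~\ref{thm:intro-bridge}, and the constructions of \cite{RahmanQuiverDataI,RahmanQuiverDataII}.

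\textbf{Step 1: finiteness.} I would first recall that $A_\Sigma=(V_\Sigma,E_\Sigma,c_\Sigma)$ is finite.
\begin{itemize}
\item $V_\Sigma$ has $r$ elements.
\item $E_\Sigma=\Ext^1_{\Perv(X_0;\Q)}(Q_\Sigma,IC_{X_0})$ is a finite-dimensional $\Q$-vector space, since $Q_\Sigma$ is a finite sum of point objects and perverse Ext groups on a compact variety are finite-dimensional.
\item $c_\Sigma\in\Q^r$.
\end{itemize}
Next, $I_\Sigma^{(0/1)}$ is an $r\times r$ binary matrix by \cite{RahmanQuiverDataII}. Finally, under the bounded Hom-finite convention, each $\mathsf H_{ij}$ has finite total cohomology. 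Hence each $P_{ij}(q)$ is a Laurent polynomial with nonnegative integer coefficients, and $I_\Sigma^{\mathrm{gr}}$ is an $r\times r$ matrix of such polynomials. In the locally finite case, the entries are formal series instead, and I would state the corollary with that caveat.

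\textbf{Step 2: intrinsic determination.} I would check that $I_\Sigma^{\mathrm{gr}}$ depends only on the MTT datum, not on auxiliary choices. If $\Psi_j\Phi_i(L_i)$ and $L_j$ are replaced by isomorphic objects, then $\mathsf H_{ij}$ changes only by a quasi-isomorphism. This is immediate from the functoriality of $\mathbb T_{ij}$ recorded in Theorem~\ref{thm:intro-exactness} and Appendix~\ref{app:profunctors}. Since $\dim H^m$ is invariant under quasi-isomorphism, each $P_{ij}$ is well defined.

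\textbf{Step 3: compatibility.} The point is that the three layers are consistent, not merely listed together. By Theorem~\ref{thm:intro-graded-package}, under the hypotheses of Theorem~\ref{thm:intro-bridge}:
\begin{itemize}
\item $I_\Sigma^{(0/1)}(i,j)=1$ implies $P_{ij}\neq 0$;
\item equivalently, $P_{ij}(1)\ge 1$ on supported channels.
\end{itemize}
So the support matrix is dominated by the support of $I_\Sigma^{\mathrm{gr}}$, and the graded layer genuinely refines the binary one. Both layers are indexed by the same vertex set $V_\Sigma$ from $A_\Sigma$, which ties the triple together.

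\textbf{Main obstacle.} The delicate part is the phrase ``input passed to the later stage.'' This is a descriptive claim, not a mathematical one, and I would not try to prove it. I would read it as the assertion that the triple is a finite, choice-independent invariant of the MTT datum, which Steps 1 and 2 establish. I would also note explicitly that $P_{ij}=0$ on unsupported channels is not claimed, so ``refines'' holds only in the one-directional sense of Step 3.
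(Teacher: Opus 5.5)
Your proposal is correct and follows the paper's route: the paper proves this (as Proposition~\ref{prop:inheritance-package}) purely by assembly, taking $A_\Sigma$ from \cite{RahmanQuiverDataI}, $I_\Sigma^{(0/1)}$ from \cite{RahmanQuiverDataII}, and $I_\Sigma^{\mathrm{gr}}$ from Theorem~\ref{thm:graded-interaction-theorem}, so your finiteness, quasi-isomorphism-invariance, and one-directional compatibility checks are extra detail the paper leaves implicit. The one thing to fix is your Step 1 justification: finite-dimensionality of $E_\Sigma$ follows directly from the inherited decomposition $E_\Sigma\cong\bigoplus_{k=1}^r\Q e_k$ (or from point-support together with constructibility), not from compactness of $X_0$, which the paper never assumes.
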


\subsection{Role of the paper and organization}

The paper is organized as a bridge from binary support to graded interaction.  Section~2 recalls the state-data package of \cite{RahmanQuiverDataI}, the support-level interaction package of \cite{RahmanQuiverDataII}, and the corrected-extension shadow package of \cite{RahmanPerverseNearbyCycles,RahmanMixedHodgeModules}.  Section~3 shows why the naive extension-only route remains local and does not by itself produce the desired off-diagonal interaction layer.  Section~4 defines MTT data, interaction profunctors, and probe evaluations.

Section~5 proves exactness and the associated long exact interaction sequences.  Section~6 introduces right and left triangle-visibility and proves the corresponding nonvanishing criterion.  Section~7 formulates the probe, content, and detector hypotheses.  Section~8 proves the conditional bridge theorem, upgrading supported channels to nontrivial interaction complexes.  Section~9 defines the graded polynomials \(P_{ij}(q)\) and the matrix \(I_\Sigma^{\mathrm{gr}}\).  Section~10 records the inherited package
\[
(A_\Sigma,I_\Sigma^{(0/1)},I_\Sigma^{\mathrm{gr}})
\]
as the finite algebraic output carried forward to the later stability and BPS sequel.

%----------------------------------------------------
\section{Finite-node input from previous work}
\label{sec:finite-node-input}

We recall the finite-node data used throughout the paper.  The inputs are:
\[
A_\Sigma=(V_\Sigma,E_\Sigma,c_\Sigma),
\qquad
I_\Sigma^{(0/1)},
\qquad
IC_{X_0}\to \mathcal P\to Q_\Sigma\to IC_{X_0}[1].
\]
The first is the state-data package of \cite{RahmanQuiverDataI}; the second is the support-level interaction package of \cite{RahmanQuiverDataII}; the third is the corrected-extension shadow package from \cite{RahmanPerverseNearbyCycles,RahmanMixedHodgeModules}.  This section fixes notation and records only the inherited structures needed below.

\begin{remark}[notation]
Throughout, \(\Q\) denotes the rational coefficient field, while \(Q_k=i_{k*}\Q_{\{p_k\}}\) denotes the point-supported shadow object at the node \(p_k\), and \(Q_\Sigma=\bigoplus_k Q_k\) denotes their finite direct sum.
\end{remark}

\subsection{State data}

Let
\[
\Sigma=\{p_1,\dots,p_r\}\subset X_0
\]
be the finite node set.  \cite{RahmanQuiverDataI} associates to \(\Sigma\) the finite state-data package
\[
A_\Sigma=(V_\Sigma,E_\Sigma,c_\Sigma)
\]
\cite{RahmanQuiverDataI}.  Here
\[
V_\Sigma=\{v_1,\dots,v_r\}
\]
is the vertex set indexed by the nodes, and
\[
E_\Sigma
=
\Ext^1_{\Perv(X_0;\Q)}(Q_\Sigma,IC_{X_0})
\]
is the extension space determined by the corrected finite-node quotient, where
\[
Q_\Sigma=\bigoplus_{k=1}^r Q_k,
\qquad
Q_k=i_{k*}\Q_{\{p_k\}}.
\]
The finite-node extension structure gives the nodewise decomposition
\[
E_\Sigma\cong \bigoplus_{k=1}^r \Q e_k,
\]
so that \(E_\Sigma\) carries the distinguished nodewise basis
\[
\{e_1,\dots,e_r\}
\]
\cite{RahmanQuiverDataI}.

The coefficient vector
\[
c_\Sigma=(c_1,\dots,c_r)\in\Q^r
\]
records the corrected global class in this basis:
\[
[\mathcal P]_{\mathrm{perv}}
=
\sum_{k=1}^r c_k e_k.
\]
Thus \(A_\Sigma\) is the finite algebraic state package inherited from \cite{RahmanQuiverDataI}.  In the present paper, \(A_\Sigma\) is fixed input rather than a structure to be reconstructed.

\subsection{Support-level interaction}

The second input is the support-level interaction package of  \cite{RahmanQuiverDataII}.  The categorical source of that package is the finite-node schober datum: a bulk category
\[
\mathcal C_{\mathrm{bulk}},
\]
localized categories
\[
\mathcal C_{p_i},
\qquad 1\le i\le r,
\]
and exact attachment functors
\[
\Phi_i:\mathcal C_{p_i}\to \mathcal C_{\mathrm{bulk}},
\qquad
\Psi_i:\mathcal C_{\mathrm{bulk}}\to \mathcal C_{p_i}.
\]
We write
\[
F_\Sigma=\{(\Phi_i,\Psi_i)\}_{i=1}^r
\]
for this finite transport package.

\cite{RahmanQuiverDataII} uses this transport data to construct a binary support law
\[
I_\Sigma^{(0/1)}.
\]
Equivalently, \(I_\Sigma^{(0/1)}\) records the finite support relation on the relevant vertex set: it says where a mediated channel is present, but not how the channel is graded or how it behaves under exact triangles \cite{RahmanQuiverDataII}.  This distinction is the point of departure for the present paper.  The input from \cite{RahmanQuiverDataII} is the support skeleton of pairwise interaction; the output constructed here is a graded interaction refinement.

To avoid notation conflict, we reserve \(Q_\Sigma\) for the corrected shadow-side quotient
\[
Q_\Sigma=\bigoplus_{k=1}^r Q_k.
\]
The quiver-theoretic package inherited from \cite{RahmanQuiverDataII} will therefore be called the \emph{finite support package}, and its binary interaction matrix will be denoted throughout by
\[
I_\Sigma^{(0/1)}.
\]

\subsection{Corrected-extension shadow package}

The third input is the corrected-extension shadow package.  On the perverse-sheaf side, the corrected finite-node architecture is encoded by the distinguished triangle
\[
IC_{X_0}
\longrightarrow
\mathcal P
\longrightarrow
Q_\Sigma
\longrightarrow
IC_{X_0}[1],
\]
where
\[
Q_\Sigma=\bigoplus_{k=1}^r Q_k,
\qquad
Q_k=i_{k*}\Q_{\{p_k\}}.
\]
This triangle is the shadow-side extension architecture underlying the state-data package \(A_\Sigma\) \cite{RahmanPerverseNearbyCycles,RahmanQuiverDataI}.  Its mixed-Hodge-module refinement supplies the corresponding Hodge-theoretic version of the same finite-node quotient structure \cite{RahmanMixedHodgeModules}.

The point-supported objects \(Q_k\) are the shadow-side localized sectors.  In the schober language, they correspond to the localized categories \(\mathcal C_{p_k}\) and to the probe objects introduced later.  Thus the corrected-extension shadow package provides the compatibility target for the categorical transport formalism: the MTT construction is required to refine the support law while remaining anchored to this corrected finite-node extension structure.

In the sequel, the phrase \emph{corrected-extension shadow package} refers to the finite-node shadow data
\[
IC_{X_0}
\longrightarrow
\mathcal P
\longrightarrow
Q_\Sigma
\longrightarrow
IC_{X_0}[1],
\qquad
Q_\Sigma=\bigoplus_{k=1}^r Q_k.
\]
The later sections construct a categorical pairwise interaction formalism compatible with this package and refining the binary support matrix \(I_\Sigma^{(0/1)}\).

%----------------------------------------------------
\section{Obstruction to the naive extension-only bridge}
\label{sec:extension-only-obstruction}

Before introducing mediated triangle transport, we isolate the limitation of the most direct shadow-side construction.  The corrected-extension package already gives the finite extension space
\[
E_\Sigma
=
\Ext^1_{\Perv(X_0;\Q)}(Q_\Sigma,IC_{X_0})
\cong
\bigoplus_{k=1}^r \Q e_k
\]
and its nodewise basis \(\{e_1,\dots,e_r\}\) \cite{RahmanQuiverDataI}.  It is therefore natural to ask whether the first pairwise interaction layer can be constructed using only the point-supported objects
\[
Q_i=i_{i*}\Q_{\{p_i\}}
\]
and their extension data.  The answer is negative in the sense needed here: this route records local coupling to the corrected extension, but it does not by itself produce the bulk-mediated off-diagonal transport mechanism required for the graded interaction package.

\subsection{The local extension-valued attempt}

The corrected-extension shadow package is encoded by the triangle
\[
IC_{X_0}
\longrightarrow
\mathcal P
\longrightarrow
Q_\Sigma
\longrightarrow
IC_{X_0}[1],
\qquad
Q_\Sigma=\bigoplus_{k=1}^r Q_k,
\qquad
Q_k=i_{k*}\Q_{\{p_k\}}.
\]
Since \(Q_\Sigma\) is a finite direct sum of point-supported localized objects, the corrected extension class decomposes into nodewise components.  Equivalently, one obtains the finite extension space
\[
E_\Sigma
=
\Ext^1_{\Perv(X_0;\Q)}(Q_\Sigma,IC_{X_0})
\cong
\bigoplus_{k=1}^r \Q e_k
\]
from \cite{RahmanQuiverDataI}.  Restricting the corrected global extension class to the \(i\)-th summand gives a nodewise class
\[
\delta_i
\in
\Ext^1_{\Perv(X_0;\Q)}(Q_i,IC_{X_0}),
\qquad
1\le i\le r.
\]

This suggests a first extension-valued approach to pairwise interaction: one might seek secondary classes
\[
\beta_{ij}
\in
\Ext^2_{\Perv(X_0;\Q)}(Q_i,Q_j),
\]
or related extension-theoretic objects, as coefficients measuring interaction between localized shadow sectors.  Such an approach is natural because it remains entirely within the corrected-extension shadow package and does not introduce additional categorical transport data.

However, this is not the interaction layer needed in the present sequence.  The desired object should measure how data from the \(i\)-th localized sector are transported through the common bulk and detected in the \(j\)-th localized sector.  Extension classes built only from the point-supported summands \(Q_i\) and \(Q_j\) do not, by themselves, supply that transport mechanism.

\subsection{Locality of the extension-only construction}

The obstruction is structural.  The objects \(Q_i\) are localized shadow terms; they isolate nodewise correction data in the quotient
\[
Q_\Sigma=\bigoplus_{k=1}^r Q_k.
\]
The classes
\[
\delta_i\in\Ext^1(Q_i,IC_{X_0})
\]
therefore record how the \(i\)-th shadow sector couples to the common object \(IC_{X_0}\).  They do not define an exact functor carrying data from the \(i\)-th localized sector to the \(j\)-th localized sector.

Thus, even if one can formally write extension-valued objects such as
\[
\beta_{ij}\in\Ext^2(Q_i,Q_j),
\]
the corrected-extension shadow package alone does not identify them as the primary off-diagonal pairwise interaction law.  Their natural role is extension-theoretic and shadow-side.  By contrast, the present paper requires an interaction object that is transport-theoretic, directed, and compatible with distinguished triangles.

This distinction is precisely where the schober input becomes necessary.  The composites
\[
\Psi_j\Phi_i
\]
are not merely shadow-side extension classes; they are exact bulk-mediated transport functors between localized categories.  They therefore retain information that the extension-only construction does not make active.

\subsection{Obstruction proposition}

\begin{proposition}[obstruction to the naive extension-only bridge]
\label{prop:obstruction-naive-extension-only-bridge}
The shadow-side extension-only construction determined by the localized objects \(Q_i\) and the nodewise classes
\[
\delta_i\in\Ext^1(Q_i,IC_{X_0})
\]
is intrinsically local.  It records nodewise coupling to the corrected extension, but it does not by itself furnish the bulk-mediated off-diagonal interaction layer required for the later stability and BPS formalism.
\end{proposition}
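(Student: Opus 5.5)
We make the informal statement precise and then prove it by a computation on the shadow side. "Intrinsically local" will mean the following. Every datum produced by the extension-only construction factors through the nodewise decomposition \(Q_\Sigma=\bigoplus_k Q_k\). It also factors through the classes \(\delta_k\in\Ext^1(Q_k,IC_{X_0})\). No datum involves a functor from the \(i\)-th localized sector to the \(j\)-th.

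\emph{Step 1: extension data split nodewise.} Since \(Q_\Sigma\) is a finite direct sum, additivity of \(\Ext\) in the first variable gives
\[
\Ext^1_{\Perv(X_0;\Q)}(Q_\Sigma,IC_{X_0})\cong\bigoplus_{k=1}^r \Ext^1(Q_k,IC_{X_0}).
\]
By the nodewise basis of \cite{RahmanQuiverDataI}, each summand is \(\Q e_k\). Under this splitting the corrected class is \([\mathcal P]_{\mathrm{perv}}=\sum_k c_k e_k\), so \(\delta_k=c_ke_k\). Hence the full extension-only input is the diagonal list \((\delta_1,\dots,\delta_r)\), equivalently the vector \(c_\Sigma\). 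It carries no index pair \((i,j)\) with \(i\neq j\).

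\emph{Step 2: the candidate off-diagonal classes vanish.} For \(i\neq j\), the supports \(\{p_i\}\) and \(\{p_j\}\) are disjoint. By adjunction,
\[
\RHom(i_{i*}\Q_{\{p_i\}},i_{j*}\Q_{\{p_j\}})\simeq \RHom(\Q_{\{p_i\}},i_i^!i_{j*}\Q_{\{p_j\}}).
\]
The right side is \(0\) because \(i_i^!i_{j*}=0\) for disjoint closed points. Since \(\Perv(X_0)\) embeds in \(D^b_c(X_0)\), every \(\beta_{ij}\in\Ext^n(Q_i,Q_j)\) with \(i\ne j\) vanishes. Here we use Beilinson's realization, or at least that \(\Ext^{\le2}\) in \(\Perv\) agree with those in \(D^b_c\); for \(\Ext^1\) this holds in general, and for \(\Ext^2\) it suffices that \(\Ext^2_{\Perv}\hookrightarrow \Hom_{D}(-,-[2])\). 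Consequently the only potentially nonzero secondary classes are the diagonal ones \(\Ext^\bullet(Q_i,Q_i)\). The off-diagonal extension-only layer is identically zero.

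\emph{Step 3: any further construction stays pointwise.} Consider any composite built from the data above, for example a Yoneda product \(\Ext^1(Q_j,IC_{X_0})\otimes\Ext^1(IC_{X_0},Q_i)\to\Ext^2(Q_j,Q_i)\). By Step 2 such a product lands in zero when \(i\neq j\). Alternatively, we can observe that its value is determined by the pair \((\delta_i,\delta_j)\) separately, so it produces only a rank-one product \(c_ic_j\). Such a product is symmetric and ungraded, and it is not functorial in objects of \(\mathcal C_{p_i}\). It therefore cannot refine an arbitrary directed support law \(I_\Sigma^{(0/1)}\). To make this rigorous we would exhibit two finite-node configurations with the same \(c_\Sigma\) but different support matrices. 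Alternatively, we would note that a symmetric matrix cannot encode an asymmetric \(I_\Sigma^{(0/1)}\).

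The main obstacle is Step 3. The phrase "bulk-mediated interaction layer required later" is not a formal property. We will therefore fix as the requirement an exact bifunctor \(\mathcal C_{p_i}^{op}\times\mathcal C_{p_j}\to D(\mathrm{Vect})\) that is nonzero on supported pairs. We then show that the shadow data yield at most the zero or rank-one symmetric substitute. If the rank-one case resists a clean argument, we fall back on the vanishing of Step 2 as the decisive content of the proposition.
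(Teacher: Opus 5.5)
Your proposal is correct at the level of precision the statement admits, but its decisive step differs from the paper's. The paper's proof is purely structural. After the same nodewise splitting $E_\Sigma\cong\bigoplus_k\Q e_k$ that you use in Step~1, it observes three things: the basic objects are the summands $Q_i$; the classes $\delta_i$ only record how each $Q_i$ couples to $IC_{X_0}$; and nothing in the package is an exact functor carrying objects of the $i$-th localized sector to the $j$-th. It leaves the candidate classes $\beta_{ij}\in\Ext^2(Q_i,Q_j)$ untouched, saying only that the shadow package does not identify them as the interaction law.

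Your Step~2 instead computes these classes. Adjunction and $i_i^!i_{j*}=0$ give $\RHom(Q_i,Q_j)\simeq 0$ in $D^b_c(X_0)$ for $i\neq j$. The comparison for hearts of $t$-structures ($\Ext^{\le 1}$ bijective, $\Ext^2$ injective) transfers this to $\Perv(X_0;\Q)$, so the naive off-diagonal layer is identically zero. This buys a concrete, checkable obstruction, and it makes precise the paper's later remark that shadow-side classes only see diagonal behavior.

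The paper's route buys generality. The absence of any functor between localized sectors rules out every shadow-side construction, not only $\Ext$-valued ones, and it does not depend on whether any off-diagonal channel is supported. Indeed, under your own formalization (an exact bifunctor on $\mathcal C_{p_i}^{op}\times\mathcal C_{p_j}$ that is nonzero on supported pairs), Step~2 is decisive only if $I_\Sigma^{(0/1)}(i,j)=1$ for some $i\neq j$. Nothing in the inputs guarantees that. What is decisive in all cases is your non-functoriality remark in Step~3, which is exactly the paper's argument.

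Step~3 should be trimmed rather than repaired. The Yoneda product you write needs a class in $\Ext^1(IC_{X_0},Q_i)$. That class is not part of the extension-only input; you would have to manufacture it from $\delta_i$ by Verdier self-duality. In any case, the same adjunction, now with $i_i^!i_{i*}\simeq\mathrm{id}$, gives $\RHom(Q_i,Q_i)\simeq\Q$ concentrated in degree $0$. Hence $\Ext^2(Q_i,Q_j)=0$ for all $i,j$, including $i=j$, and the product is zero rather than ``$c_ic_j$''. Your claim that the diagonal groups $\Ext^\bullet(Q_i,Q_i)$ are the only potentially nonzero ones can therefore be sharpened: in $D^b_c(X_0)$ only $\Hom(Q_i,Q_i)\cong\Q$ survives.

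Your two proposed rigorous routes, via configurations with equal $c_\Sigma$ but different supports or via an asymmetric $I_\Sigma^{(0/1)}$, rest on facts about the support law that are not available here. Nothing in the paper asserts that $I_\Sigma^{(0/1)}$ is asymmetric or that such configurations exist. Steps~1--2, together with the observation that the shadow data define no functor on the localized categories, already give a complete proof.
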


\begin{proof}
The corrected-extension shadow package consists of the finite quotient
\[
Q_\Sigma=\bigoplus_{k=1}^r Q_k,
\qquad
Q_k=i_{k*}\Q_{\{p_k\}},
\]
together with the extension triangle
\[
IC_{X_0}
\longrightarrow
\mathcal P
\longrightarrow
Q_\Sigma
\longrightarrow
IC_{X_0}[1].
\]
Its associated extension space decomposes nodewise:
\[
E_\Sigma
=
\Ext^1(Q_\Sigma,IC_{X_0})
\cong
\bigoplus_{k=1}^r \Q e_k
\]
\cite{RahmanQuiverDataI}.  Hence the resulting classes
\[
\delta_i\in\Ext^1(Q_i,IC_{X_0})
\]
are obtained by restricting the corrected extension to individual point-supported sectors.

This construction is local in the following sense: its basic objects are the point-supported summands \(Q_i\), and its basic classes describe how each summand couples to \(IC_{X_0}\).  No exact process is produced that transports an object of the \(i\)-th localized sector into the \(j\)-th localized sector while preserving triangulated structure.  Therefore the extension-only package does not supply the derived, directed, bulk-mediated object needed for pairwise interaction.

The required transport-theoretic mechanism is instead provided by the finite-node schober datum.  Its exact functors
\[
\Phi_i:\mathcal C_{p_i}\to\mathcal C_{\mathrm{bulk}},
\qquad
\Psi_j:\mathcal C_{\mathrm{bulk}}\to\mathcal C_{p_j}
\]
give composites
\[
\Psi_j\Phi_i:\mathcal C_{p_i}\to\mathcal C_{p_j},
\]
which are organized at the support level in \cite{RahmanQuiverDataII} and arise from the bulk/localized categorical package of \cite{RahmanSchoberPaper,RahmanMultiNodeSchoberPaper}.  These composites, unlike the nodewise extension classes alone, provide the exact transport structure used in the next section.  Thus the extension-only route is insufficient for the bulk-mediated off-diagonal interaction layer sought here.
\end{proof}

\subsection{Need for a transport-based bridge}

Proposition~\ref{prop:obstruction-naive-extension-only-bridge} motivates replacing the local extension-only route by a transport-based pairwise bridge.  For every ordered pair \((i,j)\), the finite-node schober package supplies the exact composite
\[
\Psi_j\Phi_i:
\mathcal C_{p_i}
\longrightarrow
\mathcal C_{p_j}.
\]
This functor keeps the bulk sector as an active intermediary and carries triangulated structure from the source localized category to the target localized category.

The next section packages these composites, together with localized probes and shadow compatibility, into mediated triangle transport.  The corresponding interaction objects are built from
\[
\RHom_{\mathcal C_{p_j}}(\Psi_j\Phi_i(-),-),
\]
rather than from the point-supported shadow objects alone.
%----------------------------------------------------
\section{Mediated triangle transport data}
\label{sec:mtt-data}

By Proposition~\ref{prop:obstruction-naive-extension-only-bridge}, the extension-only shadow construction does not supply the bulk-mediated off-diagonal interaction layer needed here.  We therefore use the transport structure already present in the finite-node schober package.  For each ordered pair \((i,j)\), the composite
\[
\Psi_j\Phi_i:
\mathcal C_{p_i}\to\mathcal C_{p_j}
\]
carries localized data from the \(i\)-th node sector through the common bulk and back into the \(j\)-th node sector.  Mediated triangle transport packages these composites, localized probes, and shadow compatibility into the datum from which the pairwise interaction complexes are extracted.

\subsection{Bulk and localized triangulated sectors}

Let
\[
\Sigma=\{p_1,\dots,p_r\}
\]
be the finite node set.  Following the finite-node schober construction of
\cite{RahmanSchoberPaper,RahmanMultiNodeSchoberPaper}, we fix a bulk triangulated category
\[
\mathcal C_{\mathrm{bulk}}
\]
and localized triangulated categories
\[
\mathcal C_{p_i},
\qquad
1\le i\le r.
\]
The bulk category represents the common global sector, while \(\mathcal C_{p_i}\) represents the categorical sector attached to \(p_i\).  These localized categories refine the same node sectors whose shadow-side objects are
\[
Q_i=i_{i*}\Q_{\{p_i\}}
\]
in the corrected-extension package of \cite{RahmanPerverseNearbyCycles,RahmanMixedHodgeModules}.

\subsection{Exact transport functors}

For each node \(p_i\), we fix exact functors
\[
\Phi_i:\mathcal C_{p_i}\to \mathcal C_{\mathrm{bulk}},
\qquad
\Psi_i:\mathcal C_{\mathrm{bulk}}\to \mathcal C_{p_i}.
\]
These are the attachment functors of the finite-node schober datum \cite{RahmanSchoberPaper,RahmanMultiNodeSchoberPaper}.  The functor \(\Phi_i\) transports data from the \(i\)-th localized sector into the bulk, while \(\Psi_i\) transports data from the bulk back to the \(i\)-th localized sector.

For each ordered pair \((i,j)\), define
\[
A_{ij}:=\Psi_j\Phi_i:
\mathcal C_{p_i}\to \mathcal C_{p_j}.
\]
For \(X\in\mathcal C_{p_i}\), we write
\[
A_{ij}(X)=\Psi_j\Phi_i(X)\in\mathcal C_{p_j}.
\]
This exact composite is the basic bulk-mediated transport mechanism used throughout the paper.

\subsection{Localized probes}

For each \(p_i\in\Sigma\), we include in the MTT datum a distinguished object
\[
L_i\in\mathcal C_{p_i},
\]
called the \emph{\(i\)-th localized probe}.  The probes are not asserted here to be uniquely constructed in maximal generality.  Rather, they are part of the chosen MTT datum and are required to be compatible with the corrected-extension shadow package.  Concretely, \(L_i\) is intended to represent, on the categorical side, the shadow-side point-supported object
\[
Q_i=i_{i*}\Q_{\{p_i\}}.
\]
The precise probe, content, and detector assumptions needed for the bridge theorem are formulated in Section~\ref{sec:probe-content-detector}.

\subsection{Corrected-extension shadow compatibility}

The MTT datum is required to remain anchored to the corrected-extension shadow package
\[
IC_{X_0}
\longrightarrow
\mathcal P
\longrightarrow
Q_\Sigma
\longrightarrow
IC_{X_0}[1],
\qquad
Q_\Sigma=\bigoplus_{k=1}^r Q_k,
\qquad
Q_k=i_{k*}\Q_{\{p_k\}},
\]
constructed on the perverse and mixed-Hodge-module sides in
\cite{RahmanPerverseNearbyCycles,RahmanMixedHodgeModules}.  We encode this compatibility by a shadow mechanism
\[
\operatorname{Sh}
\]
relating the categorical bulk/localized sectors to the corrected-extension package. For the purposes of the present paper, we regard
\[
\operatorname{Sh}:\mathcal C_{p_j}\longrightarrow D^b(\Perv(X_0;\Q))
\]
or, in the Hodge-theoretic refinement, the corresponding mixed-Hodge-module realization, as a shadow mechanism landing in a category where derived Hom objects with the point-supported sectors \(Q_j\) are defined.  We do not construct \(\operatorname{Sh}\) here; it is part of the MTT compatibility datum inherited from the finite-node schober shadow of \cite{RahmanSchoberPaper,RahmanMultiNodeSchoberPaper}.

At minimum, \(\operatorname{Sh}\) identifies each probe \(L_i\) with the corresponding localized shadow sector \(Q_i\) in the sense required by the later hypotheses.  Thus the probes are categorical avatars of the point-supported summands of \(Q_\Sigma\), and the transport formalism is measured against the corrected finite-node extension structure rather than introduced independently.

\subsection{Derived interaction profunctors}

We now define the basic pairwise interaction objects.

\begin{definition}[MTT interaction profunctor]
\label{def:mtt-interaction-profunctor}
For each ordered pair \((i,j)\), the \emph{MTT interaction profunctor} is
\[
\mathbb T_{ij}:
\mathcal C_{p_i}^{op}\times\mathcal C_{p_j}
\to
D(\mathrm{Vect})
\]
defined by
\[
\mathbb T_{ij}(X,Y)
:=
\RHom_{\mathcal C_{p_j}}(\Psi_j\Phi_i(X),Y)
=
\RHom_{\mathcal C_{p_j}}(A_{ij}(X),Y).
\]
\end{definition}

The notation reflects the fact that \(\mathbb T_{ij}\) is contravariant in \(X\in\mathcal C_{p_i}\) and covariant in \(Y\in\mathcal C_{p_j}\), hence is a derived interaction profunctor rather than an ordinary functor between localized categories \cite{BenabouProfunctors, LoregianCoendCalculus}.  Its role is to assign a derived pairwise interaction object to each source--target pair of localized objects.

\begin{convention}[linear and finiteness conventions]
\label{conv:mtt-finiteness}
We work in a \(k\)-linear triangulated setting in which the derived Hom objects appearing below are defined in \(D(\mathrm{Vect})\).  When graded decategorifications are formed in Section~\ref{sec:graded-interaction-package}, we assume the probe-evaluation complexes
\[
\RHom_{\mathcal C_{p_j}}(\Psi_j\Phi_i(L_i),L_j)
\]
have finite-dimensional bounded cohomology, or else locally finite cohomology so that the resulting graded expression is interpreted as a formal Poincare series.
\end{convention}

\subsection{Probe evaluation}

Evaluating the interaction profunctor on localized probes gives the pairwise interaction complex.

\begin{definition}[pairwise interaction complex]
\label{def:pairwise-interaction-complex}
For each ordered pair \((i,j)\), define
\[
\mathsf H_{ij}
:=
\mathbb T_{ij}(L_i,L_j)
=
\RHom_{\mathcal C_{p_j}}(\Psi_j\Phi_i(L_i),L_j)
=
\RHom_{\mathcal C_{p_j}}(A_{ij}(L_i),L_j).
\]
\end{definition}

The complexes \(\mathsf H_{ij}\) are the derived pairwise interaction objects extracted from the MTT datum.  They refine the binary support entries of \(I_\Sigma^{(0/1)}\) by replacing support-only information with derived Hom data:
\[
I_\Sigma^{(0/1)}
\quad\leadsto\quad
\mathsf H_{ij}.
\]
The next section proves the exactness properties that make these complexes useful for later graded interaction and BPS-facing constructions.

%----------------------------------------------------
\section{Exactness and triangle transport}
\label{sec:exactness-triangle-transport}

Mediated triangle transport is useful because it preserves exact-triangle structure.  Once interaction is encoded by
\[
\mathbb T_{ij}(X,Y)
=
\RHom_{\mathcal C_{p_j}}(A_{ij}(X),Y),
\qquad
A_{ij}=\Psi_j\Phi_i,
\]
distinguished triangles in the source localized category give distinguished triangles, and hence long exact cohomology sequences, in the derived interaction target.  This section records the formal exactness properties used later.

\subsection{Exactness of bulk-mediated transport}

For each ordered pair \((i,j)\), recall that
\[
A_{ij}
=
\Psi_j\Phi_i:
\mathcal C_{p_i}\to \mathcal C_{p_j}.
\]
The functors \(\Phi_i\) and \(\Psi_j\) are exact by the finite-node schober input \cite{RahmanSchoberPaper,RahmanMultiNodeSchoberPaper}.  Hence their composite is exact in the usual triangulated sense \cite{KS,DimcaSheaves,CDW}.

\begin{proposition}[exactness of bulk-mediated transport]
\label{prop:exactness-bulk-mediated-transport}
For each ordered pair \((i,j)\), the composite
\[
A_{ij}
=
\Psi_j\Phi_i:
\mathcal C_{p_i}\to \mathcal C_{p_j}
\]
is an exact functor of triangulated categories.
\end{proposition}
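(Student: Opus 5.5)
The argument is the standard fact that a composite of exact functors between triangulated categories is exact. I would first recall what exactness means here. An exact (triangulated) functor $F:\mathcal A\to\mathcal B$ is an additive functor together with a natural isomorphism $\xi_F:F\circ[1]_{\mathcal A}\xrightarrow{\sim}[1]_{\mathcal B}\circ F$. It is required to send every distinguished triangle $X'\xrightarrow{f}X\xrightarrow{g}X''\xrightarrow{h}X'[1]$ to a distinguished triangle
\[
F(X')\xrightarrow{F(f)}F(X)\xrightarrow{F(g)}F(X'')\xrightarrow{\xi_{F,X'}\circ F(h)}F(X')[1].
\]
By the finite-node schober input \cite{RahmanSchoberPaper,RahmanMultiNodeSchoberPaper}, recalled in Section~\ref{sec:mtt-data}, both $(\Phi_i,\xi_{\Phi_i})$ and $(\Psi_j,\xi_{\Psi_j})$ are exact in this sense.

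The plan then has three steps. First, $A_{ij}=\Psi_j\Phi_i$ is additive, since it is a composite of additive functors. Second, I would equip $A_{ij}$ with the composite commutation isomorphism
\[
\xi_{A_{ij}}:=(\xi_{\Psi_j}\Phi_i)\circ(\Psi_j\xi_{\Phi_i}):\ \Psi_j\Phi_i\circ[1]\xrightarrow{\sim}[1]\circ\Psi_j\Phi_i .
\]
This is a natural isomorphism because whiskering and vertical composition preserve natural isomorphisms. Third, given a distinguished triangle in $\mathcal C_{p_i}$, apply $\Phi_i$ to get a distinguished triangle in $\mathcal C_{\mathrm{bulk}}$ with connecting map $\xi_{\Phi_i,X'}\circ\Phi_i(h)$. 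Then apply $\Psi_j$ to get a distinguished triangle in $\mathcal C_{p_j}$ with connecting map
\[
\xi_{\Psi_j,\Phi_i X'}\circ\Psi_j(\xi_{\Phi_i,X'})\circ\Psi_j\Phi_i(h)=\xi_{A_{ij},X'}\circ A_{ij}(h),
\]
using functoriality of $\Psi_j$. This is exactly the triangle required by the definition for $(A_{ij},\xi_{A_{ij}})$.

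The only step that needs any care is the bookkeeping of the shift-commutation isomorphisms in the third step. One must check that the third morphism of the image triangle is the one prescribed by the composite structure $\xi_{A_{ij}}$, and not merely some morphism that makes the triangle distinguished. The displayed identity settles this directly. If the schober input instead presents exactness only up to isomorphism of triangles, I would note that the class of distinguished triangles is closed under isomorphism. Transporting along the canonical isomorphisms then yields the same conclusion. Nothing specific to the conifold geometry is used, so the argument is purely formal, in the sense of \cite{KS}.
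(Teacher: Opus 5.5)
Your proposal is correct and follows the same route as the paper, which simply invokes the standard fact that a composite of exact functors between triangulated categories is exact. Your explicit construction of the composite shift-commutation isomorphism $\xi_{A_{ij}}=(\xi_{\Psi_j}\Phi_i)\circ(\Psi_j\xi_{\Phi_i})$ and your check of the connecting morphism just unpack that fact, which the paper leaves implicit.
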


\begin{proof}
Both \(\Phi_i\) and \(\Psi_j\) are exact.  The composite of exact functors between triangulated categories is exact, so \(A_{ij}=\Psi_j\Phi_i\) is exact.
\end{proof}

\begin{corollary}[transport of distinguished triangles]
\label{cor:transport-distinguished-triangles}
Let
\[
X'\to X\to X''\to X'[1]
\]
be a distinguished triangle in \(\mathcal C_{p_i}\).  Then
\[
A_{ij}(X')\to A_{ij}(X)\to A_{ij}(X'')\to A_{ij}(X')[1]
\]
is a distinguished triangle in \(\mathcal C_{p_j}\).
\end{corollary}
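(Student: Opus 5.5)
This is an immediate consequence of Proposition~\ref{prop:exactness-bulk-mediated-transport}, so the plan is simply to unwind the definition of an exact functor. By that proposition, \(A_{ij}=\Psi_j\Phi_i\) is an exact functor of triangulated categories. This means it comes with a natural isomorphism \(\theta:A_{ij}(-[1])\xrightarrow{\sim}A_{ij}(-)[1]\), and it sends distinguished triangles to distinguished triangles, with the connecting morphism twisted by \(\theta\). I will apply this to the given triangle \(X'\xrightarrow{f}X\xrightarrow{g}X''\xrightarrow{h}X'[1]\) in \(\mathcal C_{p_i}\).

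The first step is to write down the image triangle
\[
A_{ij}(X')\xrightarrow{A_{ij}(f)}A_{ij}(X)\xrightarrow{A_{ij}(g)}A_{ij}(X'')\xrightarrow{\theta_{X'}\circ A_{ij}(h)}A_{ij}(X')[1].
\]
It is distinguished in \(\mathcal C_{p_j}\) by the definition of exactness. If one prefers to argue in two stages, note that \(\Phi_i\) carries the triangle to a distinguished triangle in \(\mathcal C_{\mathrm{bulk}}\), and \(\Psi_j\) then carries that one to a distinguished triangle in \(\mathcal C_{p_j}\). The composite comparison isomorphism is \(\theta=\theta^{\Psi_j}_{\Phi_i(-)}\circ\Psi_j(\theta^{\Phi_i})\).

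The only point needing care, and it is a bookkeeping matter rather than a real obstacle, is the identification of \(A_{ij}(X'[1])\) with \(A_{ij}(X')[1]\) in the third morphism. The statement writes the target as \(A_{ij}(X')[1]\), so the connecting map must be understood as \(A_{ij}(h)\) composed with the structural isomorphism \(\theta_{X'}\). Distinguished triangles are closed under isomorphism of triangles, so this choice of identification does not affect distinguishedness, and the corollary follows.
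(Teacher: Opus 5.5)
Your proposal is correct and follows the paper's route: the paper simply cites exactness of $A_{ij}=\Psi_j\Phi_i$ from Proposition~\ref{prop:exactness-bulk-mediated-transport}, and your explicit tracking of the comparison isomorphism $\theta$, including the composite formula, faithfully unpacks that one line. One caution: the triangle is distinguished because you use the specific structural isomorphism $\theta_{X'}$, not because distinguished triangles are closed under isomorphism. An arbitrary identification $A_{ij}(X'[1])\cong A_{ij}(X')[1]$, for example twisting the connecting map by $-1$, can fail to give a distinguished triangle, so your closing sentence is valid only for $\theta$ itself.
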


\begin{proof}
This follows immediately from Proposition~\ref{prop:exactness-bulk-mediated-transport}.
\end{proof}

Thus localized data can be transported through the bulk without losing the distinguished-triangle structure needed for the interaction theory.

\subsection{Exactness in the source variable}

Fix \((i,j)\) and \(Y\in\mathcal C_{p_j}\).  The source-variable part of the interaction profunctor is
\[
X
\longmapsto
\mathbb T_{ij}(X,Y)
=
\RHom_{\mathcal C_{p_j}}(A_{ij}(X),Y).
\]
Because \(X\) appears in the first argument of \(\RHom\), the resulting exactness is contravariant: a distinguished triangle in \(\mathcal C_{p_i}\) produces the reversed distinguished triangle in \(D(\mathrm{Vect})\).

\begin{proposition}[MTT exactness in the source variable]
\label{prop:mtt-exactness-source-variable}
Fix \((i,j)\) and \(Y\in\mathcal C_{p_j}\).  If
\[
X'\to X\to X''\to X'[1]
\]
is a distinguished triangle in \(\mathcal C_{p_i}\), then
\[
\mathbb T_{ij}(X'',Y)
\to
\mathbb T_{ij}(X,Y)
\to
\mathbb T_{ij}(X',Y)
\to
\mathbb T_{ij}(X'',Y)[1]
\]
is a distinguished triangle in \(D(\mathrm{Vect})\).
\end{proposition}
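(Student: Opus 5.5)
The plan is to factor the statement into two formal steps: transport the triangle through $A_{ij}$, then apply the contravariant cohomological functor $\RHom_{\mathcal C_{p_j}}(-,Y)$.

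\textbf{Step 1: transport.} By Corollary~\ref{cor:transport-distinguished-triangles}, the triangle $X'\to X\to X''\to X'[1]$ in $\mathcal C_{p_i}$ is carried to a distinguished triangle
\[
A_{ij}(X')\to A_{ij}(X)\to A_{ij}(X'')\to A_{ij}(X')[1]
\]
in $\mathcal C_{p_j}$. This uses only the exactness of $\Psi_j\Phi_i$ from Proposition~\ref{prop:exactness-bulk-mediated-transport}.

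\textbf{Step 2: apply $\RHom(-,Y)$.} Under Convention~\ref{conv:mtt-finiteness}, $\mathcal C_{p_j}$ is a $k$-linear triangulated category whose derived Hom objects are defined in $D(\mathrm{Vect})$. I interpret this, as one must for $\RHom$ to be meaningful, as saying $\mathcal C_{p_j}$ has a dg (or stable $\infty$-) enhancement. In that setting $\RHom_{\mathcal C_{p_j}}(-,Y)$ is an exact functor $\mathcal C_{p_j}^{op}\to D(\mathrm{Vect})$, so it sends distinguished triangles to distinguished triangles with the arrows reversed.

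Concretely, one rotates the transported triangle so that it sits in the form required for the contravariant functor, and obtains
\[
\RHom(A_{ij}(X''),Y)\to\RHom(A_{ij}(X),Y)\to\RHom(A_{ij}(X'),Y)\to\RHom(A_{ij}(X''),Y)[1].
\]
The connecting map comes from $A_{ij}(X'')[-1]\to A_{ij}(X')$, via the identification $\RHom(A[-1],Y)\simeq\RHom(A,Y)[1]$. Substituting the definition $\mathbb T_{ij}(X,Y)=\RHom_{\mathcal C_{p_j}}(A_{ij}(X),Y)$ from Definition~\ref{def:mtt-interaction-profunctor} gives exactly the claimed triangle.

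\textbf{Main obstacle.} The only step that is not purely formal is justifying that $\RHom(-,Y)$ is triangulated-exact. In a bare triangulated category one only has $\Hom(-,Y)$, which is cohomological: it yields long exact sequences but not a triangle in $D(\mathrm{Vect})$. I would therefore state explicitly that the enhancement is part of the $k$-linear convention. Given an enhancement, the exactness follows from the fact that cones in a pretriangulated dg category are computed by mapping cones, and $\mathrm{Hom}$-complexes preserve these cones up to quasi-isomorphism.

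Two compatibility checks remain. The enhancement should be compatible with $\Psi_j\Phi_i$, or at least the transported triangle should be distinguished in the enhanced sense, which holds by definition of distinguished triangles in the homotopy category. The sign on the rotated connecting morphism should be tracked, though it does not affect distinguishedness up to isomorphism.

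Taking $H^0$ of the shifts $\mathbb T_{ij}(-,Y)[m]$ then yields the long exact cohomology sequence as an immediate consequence.
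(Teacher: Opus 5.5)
Your proof is correct and follows the same route as the paper: you transport the triangle through the exact composite $A_{ij}=\Psi_j\Phi_i$ via Corollary~\ref{cor:transport-distinguished-triangles}, then apply the contravariant functor $\RHom_{\mathcal C_{p_j}}(-,Y)$ to get the reversed triangle, and the sign of the connecting morphism does not matter. Your remark that triangulated exactness of $\RHom(-,Y)$ needs an enhancement is a fair sharpening of a point the paper leaves implicit in Convention~\ref{conv:mtt-finiteness}.
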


\begin{proof}
By Corollary~\ref{cor:transport-distinguished-triangles},
\[
A_{ij}(X')\to A_{ij}(X)\to A_{ij}(X'')\to A_{ij}(X')[1]
\]
is a distinguished triangle in \(\mathcal C_{p_j}\).  Applying
\[
\RHom_{\mathcal C_{p_j}}(-,Y)
\]
gives a distinguished triangle in \(D(\mathrm{Vect})\), with the order reversed because \(\RHom(-,Y)\) is contravariant in its first argument.  Substituting
\[
\mathbb T_{ij}(Z,Y)
=
\RHom_{\mathcal C_{p_j}}(A_{ij}(Z),Y)
\]
for \(Z=X',X,X''\) gives the asserted triangle. The connecting morphism is the standard one obtained by applying the contravariant derived Hom functor to the rotated distinguished triangle; its sign depends only on the usual triangulated-category convention and plays no role in the cohomological long exact (interaction) sequence below.
\end{proof}

\begin{remark}
\label{rem:DVect-remark}
Throughout the paper, \(D(\mathrm{Vect})\) denotes the derived category of complexes of vector spaces.  Distinguished triangles in \(D(\mathrm{Vect})\) yield the usual long exact cohomology sequences \cite{DimcaSheaves,KS}.
\end{remark}

\subsection{Long exact interaction sequences}

The preceding exact triangle gives the cohomological form of mediated triangle transport.

\begin{theorem}[long exact interaction sequence]
\label{thm:long-exact-interaction-sequence}
Fix \((i,j)\) and \(Y\in\mathcal C_{p_j}\).  If
\[
X'\to X\to X''\to X'[1]
\]
is a distinguished triangle in \(\mathcal C_{p_i}\), then there is a long exact sequence
\[
\cdots
\to
H^m(\mathbb T_{ij}(X'',Y))
\to
H^m(\mathbb T_{ij}(X,Y))
\to
H^m(\mathbb T_{ij}(X',Y))
\to
H^{m+1}(\mathbb T_{ij}(X'',Y))
\to
\cdots .
\]
\end{theorem}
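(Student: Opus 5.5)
The plan is to derive the long exact sequence directly from the distinguished triangle of Proposition~\ref{prop:mtt-exactness-source-variable}, using the standard fact (Remark~\ref{rem:DVect-remark}) that the cohomology functor \(H^0:D(\mathrm{Vect})\to\mathrm{Vect}\) is cohomological. First, I fix the distinguished triangle \(X'\to X\to X''\to X'[1]\) in \(\mathcal C_{p_i}\) and apply Corollary~\ref{cor:transport-distinguished-triangles}. This transports the triangle to \(A_{ij}(X')\to A_{ij}(X)\to A_{ij}(X'')\to A_{ij}(X')[1]\) in \(\mathcal C_{p_j}\). Next, Proposition~\ref{prop:mtt-exactness-source-variable} provides the reversed triangle
\[
\mathbb T_{ij}(X'',Y)\to\mathbb T_{ij}(X,Y)\to\mathbb T_{ij}(X',Y)\to\mathbb T_{ij}(X'',Y)[1]
\]
in \(D(\mathrm{Vect})\).

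Second, I apply \(H^0\) to this triangle and to all of its rotations. Rotating a distinguished triangle yields distinguished triangles. Combined with the identification \(H^0(K[m])=H^m(K)\), this gives an exact sequence at each spot. Splicing these together produces the long exact sequence
\[
\cdots\to H^m(\mathbb T_{ij}(X'',Y))\to H^m(\mathbb T_{ij}(X,Y))\to H^m(\mathbb T_{ij}(X',Y))\xrightarrow{\ \partial\ } H^{m+1}(\mathbb T_{ij}(X'',Y))\to\cdots.
\]
Here the connecting map \(\partial\) is \(H^m\) of the third morphism of the triangle. Because the target of that morphism is \(\mathbb T_{ij}(X'',Y)[1]\), its cohomology in degree \(m\) is exactly \(H^{m+1}(\mathbb T_{ij}(X'',Y))\), which matches the degree shift in the statement.

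The only point needing care is the sign of the connecting morphism, which comes from the rotation and contravariance. As the proof of Proposition~\ref{prop:mtt-exactness-source-variable} notes, that sign depends only on conventions. Multiplying a map by \(-1\) does not change its kernel or image, so exactness is unaffected. I expect no genuine obstacle: the argument is formal once the triangle in \(D(\mathrm{Vect})\) is available. One point to make explicit is that \(\RHom_{\mathcal C_{p_j}}(-,Y)\) really lands in \(D(\mathrm{Vect})\) as a triangulated functor, and this is supplied by Convention~\ref{conv:mtt-finiteness}. No finiteness is needed for this theorem itself.
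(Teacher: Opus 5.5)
Your proposal is correct and follows the paper's proof: both start from the reversed triangle in $D(\mathrm{Vect})$ given by Proposition~\ref{prop:mtt-exactness-source-variable} and take cohomology. Your extra bookkeeping (rotating the triangle, using $H^0(K[m])=H^m(K)$, checking that the connecting map lands in $H^{m+1}$, and noting that the sign convention does not affect exactness) makes explicit what the paper compresses into ``taking cohomology gives the associated long exact sequence.''
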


\begin{proof}
By Proposition~\ref{prop:mtt-exactness-source-variable},
\[
\mathbb T_{ij}(X'',Y)
\to
\mathbb T_{ij}(X,Y)
\to
\mathbb T_{ij}(X',Y)
\to
\mathbb T_{ij}(X'',Y)[1]
\]
is a distinguished triangle in \(D(\mathrm{Vect})\).  Taking cohomology gives the associated long exact sequence.
\end{proof}

Theorem~\ref{thm:long-exact-interaction-sequence} is the formal reason that \(\mathsf H_{ij}\)-type objects carry more information than the support entries of \(I_\Sigma^{(0/1)}\).  They behave functorially with respect to distinguished triangles in the source localized sector and therefore admit cohomological exactness constraints.

\begin{remark}[target-variable exactness]
\label{rem:target-variable-exactness}
For fixed \(X\in\mathcal C_{p_i}\), one may also consider
\[
Y
\longmapsto
\mathbb T_{ij}(X,Y)
=
\RHom_{\mathcal C_{p_j}}(A_{ij}(X),Y).
\]
In settings where \(\RHom_{\mathcal C_{p_j}}(A_{ij}(X),-)\) is exact in the second variable, distinguished triangles in the target variable give corresponding distinguished triangles and long exact cohomology sequences.  The present paper does not require this additional target-variable exactness, so it is not included among the standing hypotheses.
\end{remark}

%----------------------------------------------------
\section{Triangle-visible localized content}
\label{sec:triangle-visible-content}

This section isolates a simple nonvanishing mechanism for the interaction complexes.  If the target probe \(L_j\) is visible in the transported object through a distinguished triangle with a nonzero visibility morphism, then the corresponding derived Hom complex cannot vanish.

\subsection{Right and left triangle visibility}

Let \(j\) be fixed, let
\[
L_j\in\mathcal C_{p_j}
\]
be the distinguished localized probe, and let
\[
X\in\mathcal C_{p_j}.
\]

\begin{definition}[right and left triangle visibility]
\label{def:triangle-visibility}
We say that \(X\) is \emph{right triangle-visible} with respect to \(L_j\) if there exists a distinguished triangle
\[
B\to X\xrightarrow{u}L_j\to B[1]
\]
with \(u\neq 0\).  We say that \(X\) is \emph{left triangle-visible} with respect to \(L_j\) if there exists a distinguished triangle
\[
L_j\xrightarrow{v}X\to C\to L_j[1]
\]
with \(v\neq 0\).
\end{definition}

Right visibility detects a nonzero morphism from \(X\) to the probe.  Left visibility detects a nonzero morphism from the probe to \(X\).  Neither condition requires \(L_j\) to occur as a direct summand of \(X\); it only requires the probe to appear through a distinguished-triangle morphism.  This is weaker than a decomposition statement but stronger than support-level incidence alone.

\subsection{Triangle-visible nonvanishing}

\begin{proposition}[triangle-visible nonvanishing]
\label{prop:triangle-visible-nonvanishing}
Let \(X\in\mathcal C_{p_j}\).  If \(X\) is right triangle-visible with respect to \(L_j\), then
\[
\RHom_{\mathcal C_{p_j}}(X,L_j)\not\simeq 0.
\]
If \(X\) is left triangle-visible with respect to \(L_j\), then
\[
\RHom_{\mathcal C_{p_j}}(L_j,X)\not\simeq 0.
\]
\end{proposition}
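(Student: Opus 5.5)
The plan is to reduce the statement to the identification of degree-zero cohomology of derived Hom with ordinary morphisms in the triangulated category. In the $k$-linear triangulated setting of Convention~\ref{conv:mtt-finiteness}, the derived Hom object $\RHom_{\mathcal C_{p_j}}(X,Y)\in D(\mathrm{Vect})$ satisfies
\[
H^m\bigl(\RHom_{\mathcal C_{p_j}}(X,Y)\bigr)\cong \Hom_{\mathcal C_{p_j}}(X,Y[m]),
\]
and in particular $H^0\bigl(\RHom_{\mathcal C_{p_j}}(X,Y)\bigr)\cong\Hom_{\mathcal C_{p_j}}(X,Y)$. I would state this as the standing compatibility between $\RHom$ and the triangulated structure, since it is what makes $\RHom$ a $D(\mathrm{Vect})$-valued enhancement of $\Hom$.

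For the right-visible case, I would take the given distinguished triangle $B\to X\xrightarrow{u}L_j\to B[1]$ with $u\neq0$. Then $u$ is a nonzero element of $\Hom_{\mathcal C_{p_j}}(X,L_j)$, which is $H^0\bigl(\RHom_{\mathcal C_{p_j}}(X,L_j)\bigr)$. A complex with a nonzero cohomology group is not quasi-isomorphic to $0$, so $\RHom_{\mathcal C_{p_j}}(X,L_j)\not\simeq0$.

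The left-visible case is symmetric. The morphism $v\neq0$ in $L_j\xrightarrow{v}X\to C\to L_j[1]$ gives a nonzero class in $H^0\bigl(\RHom_{\mathcal C_{p_j}}(L_j,X)\bigr)$. Note that the remaining terms of each triangle ($B$, $C$ and the connecting maps) are never used. Visibility only serves to produce a nonzero morphism, and I would remark that the triangle can always be completed from any nonzero map by axiom TR1.

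The only step needing care is the identification of $H^0$ of $\RHom$ with $\Hom$. The paper has not fixed an enhancement, so I would make this explicit as part of the meaning of $\RHom_{\mathcal C_{p_j}}$ in the convention. This holds, for example, for dg-enhanced or derived categories of abelian categories. Once it is granted, the rest is immediate.
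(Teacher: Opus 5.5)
Your proof is correct and is essentially the paper's argument: the nonzero visibility morphism $u$ (resp.\ $v$) is a nonzero element of $\Hom_{\mathcal C_{p_j}}(X,L_j)=H^0\bigl(\RHom_{\mathcal C_{p_j}}(X,L_j)\bigr)$ (resp.\ of $H^0\bigl(\RHom_{\mathcal C_{p_j}}(L_j,X)\bigr)$), so the derived Hom is not quasi-isomorphic to zero. The paper uses the identification $H^0\RHom\cong\Hom$ without comment, and you are right to state it explicitly as part of what $\RHom_{\mathcal C_{p_j}}$ means.
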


\begin{proof}
If \(X\) is right triangle-visible, then there is a distinguished triangle
\[
B\to X\xrightarrow{u}L_j\to B[1]
\]
with \(u\neq 0\).  Hence
\[
\Hom_{\mathcal C_{p_j}}(X,L_j)\neq 0.
\]
Since
\[
\Hom_{\mathcal C_{p_j}}(X,L_j)
=
H^0\!\left(\RHom_{\mathcal C_{p_j}}(X,L_j)\right),
\]
the object \(\RHom_{\mathcal C_{p_j}}(X,L_j)\) is not quasi-isomorphic to zero.

The left-visible case is identical.  If
\[
L_j\xrightarrow{v}X\to C\to L_j[1]
\]
is a distinguished triangle with \(v\neq 0\), then
\[
\Hom_{\mathcal C_{p_j}}(L_j,X)\neq 0,
\]
so
\[
H^0\!\left(\RHom_{\mathcal C_{p_j}}(L_j,X)\right)\neq 0.
\]
Thus \(\RHom_{\mathcal C_{p_j}}(L_j,X)\not\simeq 0\).
\end{proof}

\begin{remark}[directionality]
\label{rem:triangle-visible-directionality}
The two visibility conditions have different variance.  Right visibility gives nonvanishing of
\[
\RHom_{\mathcal C_{p_j}}(X,L_j),
\]
whereas left visibility gives nonvanishing of
\[
\RHom_{\mathcal C_{p_j}}(L_j,X).
\]
The primary pairwise interaction complexes of this paper have the first form, so right visibility is the condition used below.
\end{remark}

\subsection{Mediated triangle visibility}

We now apply this criterion to transported probes.  For an ordered pair \((i,j)\), recall that
\[
A_{ij}(L_i)
=
\Psi_j\Phi_i(L_i)
\in
\mathcal C_{p_j}.
\]

\begin{definition}[mediated right triangle visibility]
\label{def:mediated-triangle-visibility}
The mediated transport from \(L_i\) to the \(j\)-th localized sector is \emph{right triangle-visible with respect to \(L_j\)} if there exists a distinguished triangle
\[
B\to A_{ij}(L_i)\xrightarrow{u}L_j\to B[1]
\]
with \(u\neq 0\).
\end{definition}

This condition says that the target probe \(L_j\) is visible through a nonzero morphism out of the transported source probe
\[
A_{ij}(L_i)=\Psi_j\Phi_i(L_i).
\]
Since
\[
\mathsf H_{ij}
=
\RHom_{\mathcal C_{p_j}}(A_{ij}(L_i),L_j),
\]
right visibility is exactly the triangle-visible condition adapted to the variance of the pairwise interaction complex.

\begin{corollary}[mediated triangle-visible nonvanishing]
\label{cor:mediated-triangle-visible-nonvanishing}
If \(A_{ij}(L_i)=\Psi_j\Phi_i(L_i)\) is right triangle-visible with respect to \(L_j\), then
\[
\mathsf H_{ij}
=
\RHom_{\mathcal C_{p_j}}(\Psi_j\Phi_i(L_i),L_j)
\not\simeq 0.
\]
\end{corollary}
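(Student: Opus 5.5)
The corollary is the special case of Proposition~\ref{prop:triangle-visible-nonvanishing} in which the object \(X\) is the transported probe. I will set \(X:=A_{ij}(L_i)=\Psi_j\Phi_i(L_i)\), which is an object of \(\mathcal C_{p_j}\) because \(\Psi_j\Phi_i\) maps \(\mathcal C_{p_i}\) to \(\mathcal C_{p_j}\). By Definition~\ref{def:mediated-triangle-visibility}, the hypothesis supplies a distinguished triangle
\[
B\to A_{ij}(L_i)\xrightarrow{u}L_j\to B[1],\qquad u\neq 0.
\]
This is exactly right triangle-visibility of \(X\) with respect to \(L_j\) in the sense of Definition~\ref{def:triangle-visibility}.

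Next I will invoke the right-visible half of Proposition~\ref{prop:triangle-visible-nonvanishing}. It gives
\[
\RHom_{\mathcal C_{p_j}}(A_{ij}(L_i),L_j)\not\simeq 0.
\]
Concretely, \(u\) is a nonzero element of
\[
\Hom_{\mathcal C_{p_j}}(A_{ij}(L_i),L_j)=H^0\bigl(\RHom_{\mathcal C_{p_j}}(A_{ij}(L_i),L_j)\bigr),
\]
and a complex with nonzero cohomology is not quasi-isomorphic to zero. I will then identify this complex with \(\mathsf H_{ij}\) using Definition~\ref{def:pairwise-interaction-complex}, which concludes the proof.

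The only point to check is that the convention \(H^0\RHom=\Hom\) holds in the \(k\)-linear setting of Convention~\ref{conv:mtt-finiteness}. This is already used in the proof of the proposition, so I do not expect a genuine obstacle here. Exactness of \(A_{ij}\) (Proposition~\ref{prop:exactness-bulk-mediated-transport}) is not needed for this argument.
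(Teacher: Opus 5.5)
Your proposal is correct and matches the paper's proof: the paper also obtains the corollary by applying Proposition~\ref{prop:triangle-visible-nonvanishing} with $X=A_{ij}(L_i)=\Psi_j\Phi_i(L_i)$ and identifying the result with $\mathsf H_{ij}$. Your extra step through $H^0\RHom=\Hom$ only repeats the proposition's own proof, and you are right that exactness of $A_{ij}$ is not needed.
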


\begin{proof}
Apply Proposition~\ref{prop:triangle-visible-nonvanishing} with
\[
X=A_{ij}(L_i)=\Psi_j\Phi_i(L_i).
\]
\end{proof}

Corollary~\ref{cor:mediated-triangle-visible-nonvanishing} is a sufficient criterion for nonvanishing of the pairwise interaction complex.  The next section formulates weaker shadow-side hypotheses that imply the nonvanishing needed for the conditional bridge theorem.

%----------------------------------------------------
\section{Probe, content, and detector hypotheses}
\label{sec:probe-content-detector}

The previous sections established the formal transport mechanism and a triangle-visible nonvanishing criterion.  To upgrade support-level interaction to nontrivial pairwise interaction complexes, we now state the three hypotheses used in the conditional bridge theorem.  They are not stability or BPS assumptions; they are compatibility assumptions linking the support matrix, the corrected-extension shadow package, and the localized probes.

\subsection{Probe object hypothesis}

The first hypothesis fixes the localized probes used to evaluate the interaction profunctors.

\begin{hypothesis}[probe object hypothesis]
\label{hyp:probe-object}
For each node \(p_i\in\Sigma\), the localized category \(\mathcal C_{p_i}\) is equipped with a distinguished object
\[
L_i\in\mathcal C_{p_i},
\]
called the \emph{\(i\)-th localized probe}, such that:
\begin{enumerate}
\item \(L_i\) is specified as part of the MTT datum attached to \(p_i\);
\item under the shadow mechanism \(\operatorname{Sh}\), the probe \(L_i\) is compatible with the localized shadow object
\[
Q_i=i_{i*}\Q_{\{p_i\}};
\]
\item equivalences of MTT data preserving the shadow mechanism identify the corresponding probes.
\end{enumerate}
\end{hypothesis}

This hypothesis is the structure needed to form the probe evaluations
\[
\mathsf H_{ij}
=
\mathbb T_{ij}(L_i,L_j).
\]
It does not assert a general existence-and-uniqueness theorem for probes; it records the compatibility required once an MTT datum has been fixed.

\subsection{Weak content hypothesis}

The second hypothesis connects the binary support law of \cite{RahmanQuiverDataII} to shadow-side localized content.  For each ordered pair \((i,j)\), recall that
\[
A_{ij}(L_i)
=
\Psi_j\Phi_i(L_i)
\in
\mathcal C_{p_j}.
\]

\begin{hypothesis}[weak content hypothesis]
\label{hyp:weak-content}
Let \((i,j)\) satisfy
\[
I_\Sigma^{(0/1)}(i,j)=1.
\]
Then the transported probe \(A_{ij}(L_i)\) has nontrivial \(j\)-localized content on the shadow side: its shadow is not orthogonal to \(Q_j\).  Equivalently, at least one of the following derived Hom objects is nonzero:
\[
\RHom(\operatorname{Sh}(A_{ij}(L_i)),Q_j)\not\simeq 0,
\qquad
\RHom(Q_j,\operatorname{Sh}(A_{ij}(L_i)))\not\simeq 0.
\]
\end{hypothesis}

This condition does not require \(A_{ij}(L_i)\) to contain \(Q_j\) as a direct summand, quotient, or explicitly identified constituent.  It only requires that a supported channel remain visible to the \(j\)-th localized shadow sector after applying the shadow mechanism. 

The nonorthogonality condition is deliberately two-sided: either
\[
\RHom(\operatorname{Sh}(A_{ij}(L_i)),Q_j)\not\simeq 0
\]
or
\[
\RHom(Q_j,\operatorname{Sh}(A_{ij}(L_i)))\not\simeq 0
\]
is taken as evidence of \(j\)-localized shadow content.  The detector hypothesis is one-sided because the primary interaction complex has the variance
\[
\mathsf H_{ij}
=
\RHom_{\mathcal C_{p_j}}(A_{ij}(L_i),L_j).
\]

\subsection{Weak detector hypothesis}

The third hypothesis says that the target probe detects the shadow-side localized content relevant to the variance of the primary interaction complex. Only a special case of Hypothesis~\ref{hyp:weak-detector} is used in the proof of Theorem~\ref{thm:conditional-bridge}, namely the case
\[
X=A_{ij}(L_i)=\Psi_j\Phi_i(L_i).
\]
Thus the hypothesis could be weakened to objects lying in the transported-probe locus; we keep the broader formulation to state the detector property independently of a fixed ordered pair.

\begin{hypothesis}[weak detector hypothesis]
\label{hyp:weak-detector}
Let \(X\in\mathcal C_{p_j}\).  If \(\operatorname{Sh}(X)\) is not orthogonal to \(Q_j\) in the sense of Hypothesis~\ref{hyp:weak-content}, then
\[
\RHom_{\mathcal C_{p_j}}(X,L_j)\not\simeq 0.
\]
\end{hypothesis}

The hypothesis is one-sided because the pairwise interaction complex is
\[
\mathsf H_{ij}
=
\RHom_{\mathcal C_{p_j}}(A_{ij}(L_i),L_j).
\]
Thus only detection by \(L_j\) in the second argument is needed for the bridge theorem.  No generation statement for the full localized category \(\mathcal C_{p_j}\) is assumed.

\subsection{Scope of the hypotheses}

The hypotheses above are intentionally limited.  Hypothesis~\ref{hyp:probe-object} fixes compatible probes in a chosen MTT datum.  Hypothesis~\ref{hyp:weak-content} says that a supported channel has nontrivial shadow-side \(j\)-content.  Hypothesis~\ref{hyp:weak-detector} says that the target probe \(L_j\) detects that content through the derived Hom relevant to \(\mathsf H_{ij}\).

Stronger assumptions are possible but unnecessary here.  One could require \(L_j\) to generate a thick subcategory, require \(A_{ij}(L_i)\) to contain \(L_j\) as a direct summand, or require \(\operatorname{Sh}(A_{ij}(L_i))\) to contain \(Q_j\) explicitly.  The bridge theorem below uses only the weaker nonorthogonality-and-detection conditions above.

Sections~3--6 were formal consequences of the corrected-extension, schober, and MTT setup.  The nonvanishing conclusion of Theorem~\ref{thm:conditional-bridge} is the first result that depends on Hypotheses~\ref{hyp:probe-object}, \ref{hyp:weak-content}, and \ref{hyp:weak-detector}.

%----------------------------------------------------
\section{Conditional bridge theorem}
\label{sec:conditional-bridge}

We now combine the support law, the MTT transport composite, and the probe/content/detector hypotheses.  The result is the first nonvanishing bridge from binary support to derived pairwise interaction.

\subsection{Statement}

For each ordered pair \((i,j)\), recall that
\[
A_{ij}(L_i)
=
\Psi_j\Phi_i(L_i)
\in
\mathcal C_{p_j}
\]
and
\[
\mathsf H_{ij}
=
\RHom_{\mathcal C_{p_j}}(A_{ij}(L_i),L_j)
=
\RHom_{\mathcal C_{p_j}}(\Psi_j\Phi_i(L_i),L_j).
\]
The binary matrix \(I_\Sigma^{(0/1)}\) from \cite{RahmanQuiverDataII} records whether the mediated channel \((i,j)\) is supported.  The bridge theorem says that, under the hypotheses of Section~\ref{sec:probe-content-detector}, supported channels yield nonzero interaction complexes.

\begin{theorem}[conditional bridge theorem]
\label{thm:conditional-bridge}
Assume Hypotheses~\ref{hyp:probe-object}, \ref{hyp:weak-content}, and \ref{hyp:weak-detector}.  If
\[
I_\Sigma^{(0/1)}(i,j)=1,
\]
then
\[
\mathsf H_{ij}
=
\RHom_{\mathcal C_{p_j}}(\Psi_j\Phi_i(L_i),L_j)
\not\simeq 0.
\]
\end{theorem}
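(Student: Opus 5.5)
The proof will be a direct chaining of the three hypotheses of Section~\ref{sec:probe-content-detector}, applied to the single object
\[
X:=A_{ij}(L_i)=\Psi_j\Phi_i(L_i)\in\mathcal C_{p_j}.
\]

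First, Hypothesis~\ref{hyp:probe-object} fixes the probes $L_i\in\mathcal C_{p_i}$ and $L_j\in\mathcal C_{p_j}$ and makes them compatible with the shadow objects $Q_i,Q_j$ under $\operatorname{Sh}$. Consequently the interaction complex $\mathsf H_{ij}=\mathbb T_{ij}(L_i,L_j)$ of Definition~\ref{def:pairwise-interaction-complex} is well defined. Applying the exact composite of Proposition~\ref{prop:exactness-bulk-mediated-transport} to $L_i$ gives the well-defined object $X$ above.

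Second, suppose $I_\Sigma^{(0/1)}(i,j)=1$. Then Hypothesis~\ref{hyp:weak-content} says that $\operatorname{Sh}(X)$ is not orthogonal to $Q_j$: at least one of
\[
\RHom(\operatorname{Sh}(X),Q_j), \qquad \RHom(Q_j,\operatorname{Sh}(X))
\]
is not quasi-isomorphic to zero. This is exactly the nonorthogonality condition that appears as the antecedent of Hypothesis~\ref{hyp:weak-detector}.

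Third, apply Hypothesis~\ref{hyp:weak-detector} to $X$. This yields $\RHom_{\mathcal C_{p_j}}(X,L_j)\not\simeq 0$. Since the left-hand side is $\mathsf H_{ij}$ by definition, the theorem follows.

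There is no real obstacle beyond bookkeeping, but one point needs care: the variances must line up. The content hypothesis is two-sided, while the detector hypothesis is one-sided in the direction $\RHom(X,L_j)$. The proof should note explicitly that the detector hypothesis takes the two-sided nonorthogonality as its input, so the variance mismatch does not matter: whichever of the two shadow-side Hom objects is nonzero, the conclusion is the correctly oriented nonvanishing of $\RHom(X,L_j)$.

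It is also worth remarking that Proposition~\ref{prop:triangle-visible-nonvanishing} is not needed here. Triangle visibility is an alternative sufficient condition. In the Hom-finite setting one could sharpen the conclusion to say that some $H^m(\mathsf H_{ij})\neq 0$, but a nonzero object of $D(\mathrm{Vect})$ already has nonzero cohomology in some degree, so this sharpening is automatic.
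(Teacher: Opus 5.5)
Your proposal is correct and matches the paper's proof: both use Hypothesis~\ref{hyp:weak-content} to get nonorthogonality of $\operatorname{Sh}(A_{ij}(L_i))$ to $Q_j$, apply Hypothesis~\ref{hyp:weak-detector} to $X=A_{ij}(L_i)$, and identify the resulting $\RHom_{\mathcal C_{p_j}}(X,L_j)$ with $\mathsf H_{ij}$ via Definition~\ref{def:pairwise-interaction-complex}. Your extra remarks are harmless but unnecessary: any functor already produces the object $X$, so exactness of $A_{ij}$ plays no role, and the detector hypothesis takes the two-sided nonorthogonality as its input, so the variance issue does not arise.
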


\subsection{Proof}

\begin{proof}
Let \((i,j)\) satisfy
\[
I_\Sigma^{(0/1)}(i,j)=1.
\]
The MTT composite sends the source probe to
\[
A_{ij}(L_i)
=
\Psi_j\Phi_i(L_i)
\in
\mathcal C_{p_j}.
\]
By Hypothesis~\ref{hyp:weak-content}, the shadow of \(A_{ij}(L_i)\) is not orthogonal to the localized shadow object \(Q_j\).  Applying Hypothesis~\ref{hyp:weak-detector} to
\[
X=A_{ij}(L_i)
\]
gives
\[
\RHom_{\mathcal C_{p_j}}(A_{ij}(L_i),L_j)\not\simeq 0.
\]
By Definition~\ref{def:pairwise-interaction-complex}, this object is precisely
\[
\mathsf H_{ij}.
\]
Hence \(\mathsf H_{ij}\not\simeq 0\).
\end{proof}

\subsection{Interpretation}

The theorem is a bridge result, not a computation of \(\mathsf H_{ij}\).  It shows that the binary support entry
\[
I_\Sigma^{(0/1)}(i,j)=1
\]
becomes a nonzero derived interaction complex once the supported channel has nontrivial shadow content and the target probe detects that content.

Thus the output is stronger than support but weaker than a BPS invariant. \cite{RahmanQuiverDataI} supplies the finite state variables \(A_\Sigma\); \cite{RahmanQuiverDataII} supplies the support skeleton \(I_\Sigma^{(0/1)}\); the present theorem supplies the nonvanishing step needed before forming the graded interaction package
\[
I_\Sigma^{\mathrm{gr}}.
\]
The theorem remains conditional because the probe, content, and detector hypotheses are part of the chosen MTT framework.  Its role is to identify exactly where support-level interaction becomes derived pairwise interaction.

%----------------------------------------------------
\section{The graded interaction package}
\label{sec:graded-interaction-package}

The conditional bridge theorem produces nonzero derived interaction complexes
\[
\mathsf H_{ij}
=
\RHom_{\mathcal C_{p_j}}(\Psi_j\Phi_i(L_i),L_j)
\]
on supported channels, under Hypotheses~\ref{hyp:probe-object}, \ref{hyp:weak-content}, and \ref{hyp:weak-detector}.  We now pass from these derived objects to their first graded algebraic shadows.

\subsection{Graded interaction polynomials}

We use the finiteness convention of Convention~\ref{conv:mtt-finiteness}.  Thus the cohomology groups of the probe-evaluation complexes are finite-dimensional and bounded, or locally finite if the expression below is interpreted as a formal Poincare series.

\begin{definition}[graded interaction polynomial]
\label{def:graded-interaction-polynomial}
For each ordered pair \((i,j)\), define
\[
P_{ij}(q)
:=
\sum_m \dim H^m(\mathsf H_{ij})\,q^m.
\]
\end{definition}

In the bounded Hom-finite case, \(P_{ij}(q)\) is a Laurent polynomial.  In the locally finite case, it is a formal graded Poincare series.  In either interpretation, \(P_{ij}(q)\) records the degree-by-degree cohomological size of the pairwise interaction complex.

\subsection{The graded interaction matrix}

\begin{definition}[graded interaction matrix]
\label{def:graded-interaction-matrix}
The \emph{graded interaction matrix} associated to the MTT datum is
\[
I_\Sigma^{\mathrm{gr}}
:=
(P_{ij}(q))_{1\le i,j\le r}.
\]
\end{definition}

The binary matrix \(I_\Sigma^{(0/1)}\) of \cite{RahmanQuiverDataII} records whether a mediated channel is supported.  The matrix \(I_\Sigma^{\mathrm{gr}}\) records the graded cohomological shadow of the corresponding derived interaction complex.  Thus the passage
\[
I_\Sigma^{(0/1)}
\quad\leadsto\quad
I_\Sigma^{\mathrm{gr}}
\]
is the passage from support-level interaction to graded pairwise interaction.

\subsection{Scalar specializations}

The polynomial \(P_{ij}(q)\) is the primary graded object, but two scalar shadows will be useful later.

\begin{definition}[specialized interaction weights]
\label{def:specialized-interaction-weights}
In the bounded Hom-finite case, define
\[
w_{ij}^{\mathrm{tot}}
:=
P_{ij}(1),
\qquad
w_{ij}^{\chi}
:=
P_{ij}(-1).
\]
In the locally finite case, these scalar specializations are defined only when the corresponding sums converge.
\end{definition}

Thus
\[
w_{ij}^{\mathrm{tot}}
=
\sum_m \dim H^m(\mathsf H_{ij})
\]
is the total cohomological size, while
\[
w_{ij}^{\chi}
=
\sum_m (-1)^m\dim H^m(\mathsf H_{ij})
\]
is the Euler-type signed shadow.  These are decategorified summaries of \(P_{ij}(q)\), not replacements for it.

\begin{remark}
\label{rem:specialization-remark}
The specialization \(w_{ij}^{\mathrm{tot}}\) forgets degree, while \(w_{ij}^{\chi}\) retains only the alternating parity contribution.  The graded polynomial \(P_{ij}(q)\) remains the primary interaction shadow in this paper.
\end{remark}

\subsection{The graded interaction theorem}

\begin{theorem}[graded interaction theorem]
\label{thm:graded-interaction-theorem}
Let
\[
I_\Sigma^{\mathrm{gr}}
=
(P_{ij}(q))_{1\le i,j\le r}
\]
be the graded interaction matrix associated to the MTT datum.  Then:
\begin{enumerate}
\item \(I_\Sigma^{\mathrm{gr}}\) is the first graded pairwise interaction package extracted from the MTT formalism;
\item \(I_\Sigma^{\mathrm{gr}}\) refines the binary support package \(I_\Sigma^{(0/1)}\) of \cite{RahmanQuiverDataII};
\item under the hypotheses of Theorem~\ref{thm:conditional-bridge}, every supported ordered pair \((i,j)\) satisfies
\[
P_{ij}(q)\neq 0.
\]
\end{enumerate}
\end{theorem}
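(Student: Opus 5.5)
The plan is to treat the three assertions separately. Items (1) and (2) are essentially definitional, and item (3) reduces to Theorem~\ref{thm:conditional-bridge} together with the finiteness convention.

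For (1), I will unwind Definitions~\ref{def:pairwise-interaction-complex}, \ref{def:graded-interaction-polynomial} and \ref{def:graded-interaction-matrix}. Each entry \(P_{ij}(q)\) is obtained from the MTT datum of Definition~\ref{def:intro-mtt} by a single operation: take the Poincaré series of the probe evaluation \(\mathsf H_{ij}=\mathbb T_{ij}(L_i,L_j)\). No further choices are involved beyond those already in the datum. The word ``first'' will be read as meaning that \(I_\Sigma^{\mathrm{gr}}\) is the graded invariant taken directly from the profunctor values on probes. So the content of (1) is well-definedness. By Convention~\ref{conv:mtt-finiteness}, each \(H^m(\mathsf H_{ij})\) is finite-dimensional, and the sum is a Laurent polynomial (or a formal series in the locally finite case). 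Moreover, \(P_{ij}\) depends only on the quasi-isomorphism class of \(\mathsf H_{ij}\), hence only on the MTT datum up to the probe identifications of Hypothesis~\ref{hyp:probe-object}(3).

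For (2), I will make ``refines'' precise as a support map \(\operatorname{supp}(P)=0\) if \(P=0\) and \(1\) otherwise, applied entrywise. The claim will then be that on supported pairs \(\operatorname{supp}(P_{ij})=1=I_\Sigma^{(0/1)}(i,j)\). Under the bridge hypotheses this says the support pattern of \(I_\Sigma^{\mathrm{gr}}\) dominates \(I_\Sigma^{(0/1)}\). I will not claim the converse, that unsupported pairs have \(P_{ij}=0\), since nothing in Section~\ref{sec:probe-content-detector} forces vanishing off support. I therefore expect the main obstacle to be pinning down the intended meaning of ``refines''. I will adopt the entrywise-domination reading, note that it is exactly what (3) delivers, and deduce (2) from (3) in that sense. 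Along the way I will remark that \(P_{ij}\) carries strictly more information, namely degrees and dimensions.

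For (3), fix \((i,j)\) with \(I_\Sigma^{(0/1)}(i,j)=1\). Theorem~\ref{thm:conditional-bridge} gives \(\mathsf H_{ij}\not\simeq 0\). In \(D(\mathrm{Vect})\), every complex is quasi-isomorphic to the direct sum of its shifted cohomology groups. Hence some \(H^m(\mathsf H_{ij})\neq 0\), and since it is finite-dimensional by Convention~\ref{conv:mtt-finiteness}, its dimension is positive. The coefficient of \(q^m\) in \(P_{ij}(q)\) is therefore nonzero. Since all coefficients are nonnegative integers, there is no cancellation, and \(P_{ij}(q)\neq 0\) both as a Laurent polynomial and as a formal series. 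As a side remark, \(w^{\mathrm{tot}}_{ij}\geq 1\) in the bounded case, whereas \(w^\chi_{ij}\) may vanish. This shows why the graded polynomial, rather than its Euler specialization, is what records the bridge.
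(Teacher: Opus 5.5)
Your treatment of (1) and (3) is essentially the paper's. Item (1) is read off from Definitions~\ref{def:pairwise-interaction-complex}, \ref{def:graded-interaction-polynomial} and \ref{def:graded-interaction-matrix}. Item (3) is Theorem~\ref{thm:conditional-bridge}, followed by the observation that $\mathsf H_{ij}\not\simeq 0$ forces some $H^m(\mathsf H_{ij})\neq 0$. Two of your steps there are more than you need, though both are harmless. You do not need the splitting of complexes of vector spaces into their cohomology, because an object of a derived category is zero exactly when it is acyclic. You also do not need a ``no cancellation'' argument, because the coefficient of $q^m$ is literally $\dim H^m(\mathsf H_{ij})$.

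The genuine difference is in (2). The paper argues it unconditionally and descriptively, and does not route it through (3). Its point is that $I_\Sigma^{(0/1)}$ records only whether a channel is present, while $I_\Sigma^{\mathrm{gr}}$ attaches to every ordered pair the graded cohomological shadow of $\mathsf H_{ij}$; that is what it calls a refinement. You instead make ``refines'' precise as entrywise support domination and deduce it from (3). As a result, your (2) holds only under Hypotheses~\ref{hyp:probe-object}--\ref{hyp:weak-detector}, whereas the theorem states (2) without them. Your (2) also becomes a restatement of (3).

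What your version buys is a checkable assertion, together with an honest record of its limits. Neither your argument nor the paper's shows that $I_\Sigma^{(0/1)}$ can be recovered from $I_\Sigma^{\mathrm{gr}}$, since nothing in the paper forces $P_{ij}=0$ on unsupported pairs. Moreover, Section~\ref{sec:compatibility-inheritance} notes that even nonvanishing on support is unavailable without the hypotheses. What the paper's version buys is a hypothesis-free item (2), at the price of being interpretive rather than a formal relation between the two matrices.

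To match the statement as written, keep your domination claim as a corollary of (3). Then present (2) itself in the paper's unconditional ``more data per entry'' sense, which you already mention as a side remark.
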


\begin{proof}
The first assertion follows from Definitions~\ref{def:pairwise-interaction-complex}, \ref{def:graded-interaction-polynomial}, and \ref{def:graded-interaction-matrix}: the complexes \(\mathsf H_{ij}\) are the probe evaluations of the MTT interaction profunctors, and \(P_{ij}(q)\) records their graded cohomology.

For the second assertion, \(I_\Sigma^{(0/1)}\) records only support of mediated channels.  The matrix \(I_\Sigma^{\mathrm{gr}}\) attaches to each ordered pair the graded cohomological shadow of the corresponding derived interaction complex, and hence refines the support-level package.

For the third assertion, assume the hypotheses of Theorem~\ref{thm:conditional-bridge} and suppose
\[
I_\Sigma^{(0/1)}(i,j)=1.
\]
Then Theorem~\ref{thm:conditional-bridge} gives
\[
\mathsf H_{ij}\not\simeq 0.
\]
In either case of Convention~\ref{conv:mtt-finiteness}, this implies that some cohomology group
\(H^m(\mathsf H_{ij})\) is nonzero.  Hence
\[
P_{ij}(q)
=
\sum_m \dim H^m(\mathsf H_{ij})\,q^m
\neq 0,
\]
as a Laurent polynomial in the bounded Hom-finite case, or as a formal Poincare series in the locally finite case.
\end{proof}

Theorem~\ref{thm:graded-interaction-theorem} converts the derived MTT formalism into a finite graded algebraic package.  It remains weaker than a stability condition, BPS spectrum, or wall-crossing automorphism, but it is richer than binary incidence: it records not only whether a channel exists, but also where its interaction cohomology occurs.  This graded package is the principal output carried into the next stage of the sequence.

%----------------------------------------------------
\section{Compatibility and inheritance}
\label{sec:compatibility-inheritance}

We now record how the graded package fits into the sequence developed in \cite{RahmanQuiverDataI,RahmanQuiverDataII}.  The present paper does not replace the earlier finite packages; it adds the first graded pairwise layer on top of them:
\[
A_\Sigma
\quad\leadsto\quad
I_\Sigma^{(0/1)}
\quad\leadsto\quad
I_\Sigma^{\mathrm{gr}}.
\]

\subsection{Compatibility with the state-data package}

The graded interaction matrix remains indexed by the same finite node set used in \cite{RahmanQuiverDataI}.  If
\[
A_\Sigma=(V_\Sigma,E_\Sigma,c_\Sigma),
\qquad
V_\Sigma=\{v_1,\dots,v_r\},
\]
then
\[
I_\Sigma^{\mathrm{gr}}
=
(P_{ij}(q))_{1\le i,j\le r}
\]
is indexed by ordered pairs of the same nodes.  Thus the graded interaction package is built on the finite skeleton underlying \(A_\Sigma\) \cite{RahmanQuiverDataI}.

The compatibility is not only notational.  The localized probes
\[
L_i\in\mathcal C_{p_i}
\]
are required, through the probe hypothesis and the shadow mechanism of the MTT datum, to represent the same localized sectors whose shadow-side objects are
\[
Q_i=i_{i*}\Q_{\{p_i\}}
\]
in the corrected-extension package \cite{RahmanPerverseNearbyCycles,RahmanMixedHodgeModules}.  Hence the MTT construction refines the finite-node geometry at the level of pairwise interaction while remaining anchored to the corrected-extension data from which \(A_\Sigma\) was extracted.

Section~\ref{sec:extension-only-obstruction} explains why the extension-only route remains local.  In this sense, \(A_\Sigma\) records intrinsic nodewise state data, while \(I_\Sigma^{\mathrm{gr}}\) records derived pairwise interaction data obtained from bulk-mediated transport.  The two packages are complementary layers of the same finite-node architecture.

\subsection{Compatibility with the support package}

The relationship with \cite{RahmanQuiverDataII} is direct.  The binary matrix
\[
I_\Sigma^{(0/1)}
\]
records whether a mediated channel is supported \cite{RahmanQuiverDataII}.  The graded matrix
\[
I_\Sigma^{\mathrm{gr}}
=
(P_{ij}(q))
\]
records the cohomological shadow of the corresponding derived interaction complex
\[
\mathsf H_{ij}
=
\RHom_{\mathcal C_{p_j}}(\Psi_j\Phi_i(L_i),L_j).
\]
Thus
\[
I_\Sigma^{(0/1)}
\quad\leadsto\quad
I_\Sigma^{\mathrm{gr}}
\]
is a refinement from support-level incidence to graded pairwise interaction.

Under the hypotheses of Theorem~\ref{thm:conditional-bridge}, this refinement is nontrivial on supported channels:
\[
I_\Sigma^{(0/1)}(i,j)=1
\quad\Longrightarrow\quad
P_{ij}(q)\neq 0.
\]
Without those hypotheses, \(I_\Sigma^{\mathrm{gr}}\) is still defined from the MTT datum under the finiteness convention, but the nonvanishing-on-support conclusion is not asserted.

\subsection{Inheritance proposition}

\begin{proposition}[inheritance proposition]
\label{prop:inheritance-package}
The package
\[
\bigl(A_\Sigma,\ I_\Sigma^{(0/1)},\ I_\Sigma^{\mathrm{gr}}\bigr)
\]
is the finite algebraic output of the sequence through the present paper.  More precisely:
\begin{enumerate}
\item \(A_\Sigma=(V_\Sigma,E_\Sigma,c_\Sigma)\) is the finite state-data package of \cite{RahmanQuiverDataI};
\item \(I_\Sigma^{(0/1)}\) is the support-level interaction package of \cite{RahmanQuiverDataII};
\item \(I_\Sigma^{\mathrm{gr}}\) is the graded pairwise interaction package extracted here from the MTT interaction profunctors.
\end{enumerate}
\end{proposition}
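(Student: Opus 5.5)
The proposition is essentially a bookkeeping statement, so the plan is to verify its three clauses in turn, each by pointing to where the corresponding object was defined or inherited. Nothing new has to be constructed. The only content to check is that the three pieces are well defined, finite, and indexed compatibly, so that the triple genuinely forms one package rather than three unrelated lists.

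For clause (1), I would cite Section~\ref{sec:finite-node-input}. There $A_\Sigma=(V_\Sigma,E_\Sigma,c_\Sigma)$ is recorded as fixed input from \cite{RahmanQuiverDataI}, with $V_\Sigma=\{v_1,\dots,v_r\}$. The space $E_\Sigma\cong\bigoplus_{k=1}^r\Q e_k$ is finite-dimensional with its nodewise basis, and $c_\Sigma\in\Q^r$. Finiteness of all three pieces follows from $\Sigma$ being finite.

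For clause (2), I would likewise cite the support-level subsection: $I_\Sigma^{(0/1)}$ is the binary $r\times r$ matrix of \cite{RahmanQuiverDataII}, built from the transport package $F_\Sigma$. It is finite because it is a $\{0,1\}$-valued matrix on $V_\Sigma\times V_\Sigma$.

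For clause (3), I would invoke Definitions~\ref{def:mtt-interaction-profunctor}, \ref{def:pairwise-interaction-complex}, \ref{def:graded-interaction-polynomial} and \ref{def:graded-interaction-matrix}, together with part (1) of Theorem~\ref{thm:graded-interaction-theorem}. Under Convention~\ref{conv:mtt-finiteness}, each $P_{ij}(q)$ is either a Laurent polynomial or a formal Poincaré series, so $I_\Sigma^{\mathrm{gr}}$ is a well-defined $r\times r$ matrix with graded entries.

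To justify the phrase ``output of the sequence'' I would add a compatibility check. All three components are indexed by the same node set $\Sigma$, as shown in the subsection on compatibility with the state-data package. Moreover, $I_\Sigma^{\mathrm{gr}}$ refines $I_\Sigma^{(0/1)}$ by part (2) of Theorem~\ref{thm:graded-interaction-theorem}, and is nonzero on supported entries under the bridge hypotheses by Theorem~\ref{thm:conditional-bridge}.

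The main obstacle is not technical but interpretive. The word ``finite'' for $I_\Sigma^{\mathrm{gr}}$ requires the bounded Hom-finite case of Convention~\ref{conv:mtt-finiteness}. In the locally finite case the entries are formal series, so the proof should state that finiteness refers to the finite index set and finite-dimensional graded pieces, not to polynomial entries. I would make this caveat explicit rather than claim more.
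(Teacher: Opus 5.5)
Your proposal is correct and follows essentially the same route as the paper: the paper's proof simply attributes item (1) to \cite{RahmanQuiverDataI}, item (2) to \cite{RahmanQuiverDataII}, and item (3) to Theorem~\ref{thm:graded-interaction-theorem}, which builds $I_\Sigma^{\mathrm{gr}}=(P_{ij}(q))$ from the probe evaluations of the MTT interaction profunctors. Your indexing-compatibility check and your caveat that ``finite'' should refer to the finite index set rather than to polynomial entries in the locally finite case are sound additions that the paper leaves implicit.
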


\begin{proof}
The first item is the state-data output of \cite{RahmanQuiverDataI}.  The second is the support-level output of \cite{RahmanQuiverDataII}.  The third follows from Theorem~\ref{thm:graded-interaction-theorem}, which constructs
\[
I_\Sigma^{\mathrm{gr}}
=
(P_{ij}(q))_{1\le i,j\le r}
\]
from the probe evaluations of the MTT interaction profunctors.  Together these form the finite package inherited by the next stage of the sequence.
\end{proof}

\subsection{Implications for the sequel}

The inherited package
$\bigl(A_\Sigma,\ I_\Sigma^{(0/1)},\ I_\Sigma^{\mathrm{gr}}\bigr)$
is the finite algebraic input for the later stability and BPS analysis.  The present paper does not yet introduce the dynamical structures needed for that analysis.  Rather, it supplies the state, support, and graded interaction data on which those structures may be imposed.

The next stage is expected to add:
\begin{enumerate}
\item a finite charge sector or charge lattice built from the nodewise state data;
\item stability data compatible with \(c_\Sigma\) and \(I_\Sigma^{\mathrm{gr}}\);
\item an admissible interaction law using the graded pairwise matrix;
\item chamber and wall structure;
\item first BPS indices or wall-crossing shadows attached to the finite-node package.
\end{enumerate}

Thus the present paper remains a bridge paper.  It does not compute BPS spectra or wall-crossing automorphisms.  Its role is to supply the graded interaction layer required before those genuinely dynamical questions can be formulated.

%----------------------------------------------------
\section{Examples and toy models}
\label{sec:examples-toy-models}

This section illustrates the formal behavior of the MTT package.  The examples are schematic: they are not classifications of finite-node conifold degenerations, but controlled models showing how the definitions behave in limiting cases.

\subsection{Diagonal and off-diagonal behavior}

For each ordered pair \((i,j)\), the pairwise interaction complex is
\[
\mathsf H_{ij}
=
\RHom_{\mathcal C_{p_j}}(\Psi_j\Phi_i(L_i),L_j).
\]
When \(i=j\), this becomes
\[
\mathsf H_{ii}
=
\RHom_{\mathcal C_{p_i}}(\Psi_i\Phi_i(L_i),L_i),
\]
so the source and target localized sectors coincide.  When \(i\neq j\), one obtains
\[
\mathsf H_{ij}
=
\RHom_{\mathcal C_{p_j}}(\Psi_j\Phi_i(L_i),L_j),
\]
which measures transport from the \(i\)-th localized sector through the bulk into the \(j\)-th localized sector.

This distinction reflects the obstruction result of Section~\ref{sec:extension-only-obstruction}.  The shadow-side extension classes are local and therefore naturally overlap with diagonal or self-interaction behavior.  Off-diagonal terms, by contrast, require the transport composite
\[
\Psi_j\Phi_i:
\mathcal C_{p_i}\to\mathcal C_{p_j}.
\]
They are not explained by the point-supported shadow objects \(Q_i\) alone.

At the graded level, this gives two types of entries:
\[
P_{ii}(q)
=
\sum_m \dim H^m(\mathsf H_{ii})q^m,
\qquad
P_{ij}(q)
=
\sum_m \dim H^m(\mathsf H_{ij})q^m
\quad (i\neq j).
\]
The first records bulk-mediated return to the same localized sector; the second records transport between distinct localized sectors.

\begin{remark}
\label{rem:diagonal-offdiagonal}
No relation between diagonal and off-diagonal polynomials is imposed here.  In particular, the formalism does not assume that diagonal terms dominate, nor that off-diagonal terms vanish except in special geometric situations.  Such questions belong to the later stability and BPS analysis.
\end{remark}

\subsection{Directedness}

The MTT package is naturally directed.  For ordered pairs \((i,j)\) and \((j,i)\),
\[
\mathsf H_{ij}
=
\RHom_{\mathcal C_{p_j}}(\Psi_j\Phi_i(L_i),L_j),
\qquad
\mathsf H_{ji}
=
\RHom_{\mathcal C_{p_i}}(\Psi_i\Phi_j(L_j),L_i).
\]
These complexes are formed in different target categories and use different transport composites.  Hence there is no formal reason to expect
\[
\mathsf H_{ij}\cong \mathsf H_{ji},
\qquad
P_{ij}(q)=P_{ji}(q),
\qquad
I_\Sigma^{\mathrm{gr}} \text{ symmetric}.
\]

\begin{proposition}[formal directedness]
\label{prop:formal-directedness}
Within the MTT formalism, the graded interaction package
\[
I_\Sigma^{\mathrm{gr}}=(P_{ij}(q))
\]
is attached to ordered pairs of nodes.  No symmetry relation
\[
P_{ij}(q)=P_{ji}(q)
\]
is forced by the definitions.
\end{proposition}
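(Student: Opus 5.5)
The claim is a negative one: the MTT axioms (Definition~\ref{def:intro-mtt}) together with Definitions~\ref{def:pairwise-interaction-complex} and \ref{def:graded-interaction-polynomial} do not imply $P_{ij}(q)=P_{ji}(q)$. I will prove it by exhibiting one MTT datum in which the two polynomials differ. The order of work is: first explain why nothing in the setup produces symmetry, then build the counterexample, then compute both polynomials.

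\emph{Why no symmetry is forced.} Fix $i\neq j$. By definition, $\mathsf H_{ij}=\RHom_{\mathcal C_{p_j}}(\Psi_j\Phi_i(L_i),L_j)$ is computed in $\mathcal C_{p_j}$ and uses the composite $\Psi_j\Phi_i$. On the other hand, $\mathsf H_{ji}$ is computed in $\mathcal C_{p_i}$ and uses $\Psi_i\Phi_j$. The axioms of an MTT datum ask only that each $\Phi_k$ and $\Psi_k$ be exact. They impose no adjunction between $\Phi_k$ and $\Psi_k$, no Serre or Calabi--Yau duality on the localized categories, and no condition relating the $i$-th functors to the $j$-th ones. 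So no identification between the two complexes is available.

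\emph{Counterexample.} Take $r=2$ and $\mathcal C_{p_1}=\mathcal C_{p_2}=\mathcal C_{\mathrm{bulk}}=D^b(\mathrm{Vect})$ with $L_1=L_2=k$. Set $\Phi_1=\Psi_2=\mathrm{id}$ and $\Phi_2=\Psi_1=0$. Zero functors and identities are exact, so Proposition~\ref{prop:exactness-bulk-mediated-transport} and everything built on it apply without change. Then
\[
\mathsf H_{12}=\RHom(k,k)\simeq k
\quad\text{and}\quad
\mathsf H_{21}=\RHom(0,k)=0,
\]
which gives $P_{12}(q)=1\neq 0=P_{21}(q)$. Convention~\ref{conv:mtt-finiteness} holds trivially here. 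For the shadow-compatibility clause, I will take $\operatorname{Sh}$ to send $k$ to $Q_i$. For a less degenerate example, one can instead use $\Phi_2=\Psi_1=[1]$, which yields $P_{21}(q)=q^{-1}$; this is still different from $P_{12}(q)=1$.

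\emph{Main obstacle.} The functors and $\RHom$ computations are routine. The step that needs care is whether this toy datum counts as an admissible MTT datum, because item (5) of Definition~\ref{def:intro-mtt} and the map $\operatorname{Sh}$ are not specified precisely in the paper. To settle this I will read the proposition as a statement about what follows from the listed MTT definitions alone, namely \emph{within the MTT formalism}. Under that reading, any exact functors together with a chosen $\operatorname{Sh}$ on the probes are allowed. I will state this reading explicitly in the proof.
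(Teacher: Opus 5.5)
\textbf{Comparison with the paper's proof.} Your argument is correct and goes further than the paper's. The paper's proof is only your first step. It observes that $\mathsf H_{ij}$ and $\mathsf H_{ji}$ are formed in different target categories from the composites $\Psi_j\Phi_i$ and $\Psi_i\Phi_j$, which the MTT datum does not identify, and concludes that the formalism supplies no canonical symmetry. Strictly, that shows only that no evident identification exists. It does not rule out $P_{ij}=P_{ji}$ following from the axioms by some less direct route. Your explicit model is what actually proves non-implication.

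Your main example checks out. Identities and zero functors are exact, $\mathsf H_{12}=\RHom(k,k)\simeq k$ and $\mathsf H_{21}=\RHom(0,k)=0$, so $P_{12}(q)=1\neq 0=P_{21}(q)$. The paper's version buys brevity and avoids having to argue that a degenerate toy datum is admissible. Yours buys an actual proof, under the explicitly stated reading ``within the MTT formalism,'' which matches the paper's ``forced by the definitions.''

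\textbf{Two small repairs.} First, your secondary example is miscomputed. With $\Phi_2=\Psi_1=[1]$ one gets $\Psi_1\Phi_2(k)=k[2]$ and $H^m\RHom(k[2],k)=\Hom(k,k[m-2])$, so $P_{21}(q)=q^{2}$, not $q^{-1}$. The conclusion $P_{21}\neq P_{12}$ is unaffected.

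This example is worth keeping because both entries are nonzero. It therefore remains a counterexample even if one also imposes the hypotheses of Theorem~\ref{thm:conditional-bridge} with both $(1,2)$ and $(2,1)$ supported. In that situation the zero-functor example is excluded, since the theorem would force $\mathsf H_{21}\not\simeq 0$.

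Second, $\operatorname{Sh}$ should be an exact functor on all of $\mathcal C_{p_j}$, not just a value assigned to the probes. Taking $k=\Q$, set $\operatorname{Sh}_j(V)=V\otimes Q_j$; this is exact and sends $L_j=\Q$ to $Q_j$. With this choice the content and detector hypotheses are immediate for the shifted example. Every nonzero $X\in D^b(\mathrm{Vect})$ has $\RHom(X,\Q)\not\simeq 0$, and $\operatorname{Sh}_j(X)$ is non-orthogonal to $Q_j$ only if $X\neq 0$.
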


\begin{proof}
The entry \(P_{ij}(q)\) is computed from
\[
\mathsf H_{ij}
=
\RHom_{\mathcal C_{p_j}}(\Psi_j\Phi_i(L_i),L_j),
\]
whereas \(P_{ji}(q)\) is computed from
\[
\mathsf H_{ji}
=
\RHom_{\mathcal C_{p_i}}(\Psi_i\Phi_j(L_j),L_i).
\]
The transport composites \(\Psi_j\Phi_i\) and \(\Psi_i\Phi_j\) are not identified in the MTT datum, and neither are the target categories \(\mathcal C_{p_j}\) and \(\mathcal C_{p_i}\).  Therefore the formalism supplies no canonical symmetry between the two graded entries.
\end{proof}

Thus any symmetry, skew-symmetry, or duality relation among entries of \(I_\Sigma^{\mathrm{gr}}\) would have to come from additional geometry or later stability/BPS structure, not from the MTT definitions alone.

\subsection{Triangle-visible toy models}

The following schematic examples illustrate the nonvanishing criterion of Section~\ref{sec:triangle-visible-content}.  They are abstract triangulated-category models, not asserted geometric realizations.

\begin{example}[right triangle visibility]
\label{ex:quotient-type-triangle}
Fix an ordered pair \((i,j)\) and suppose
\[
A_{ij}(L_i)=\Psi_j\Phi_i(L_i)
\]
fits into a distinguished triangle
\[
B\to A_{ij}(L_i)\xrightarrow{u}L_j\to B[1]
\]
with \(u\neq 0\).  Then \(A_{ij}(L_i)\) is right triangle-visible with respect to \(L_j\).  By Corollary~\ref{cor:mediated-triangle-visible-nonvanishing},
\[
\mathsf H_{ij}
=
\RHom_{\mathcal C_{p_j}}(A_{ij}(L_i),L_j)
\not\simeq 0.
\]
Under the finiteness convention of Convention~\ref{conv:mtt-finiteness}, this gives
\[
P_{ij}(q)\neq 0.
\]
\end{example}

Right triangle visibility is the variance directly used by the primary interaction complex \(\mathsf H_{ij}\).

\begin{example}[left triangle visibility]
\label{ex:subobject-type-triangle}
Fix an ordered pair \((i,j)\) and suppose instead that
\[
L_j\xrightarrow{v}A_{ij}(L_i)\to C\to L_j[1]
\]
is a distinguished triangle with \(v\neq 0\).  Then Proposition~\ref{prop:triangle-visible-nonvanishing} gives
\[
\RHom_{\mathcal C_{p_j}}(L_j,A_{ij}(L_i))\not\simeq 0.
\]
This detects mediated content in the opposite variance.  It does not, by itself, imply nonvanishing of the primary complex
\[
\mathsf H_{ij}
=
\RHom_{\mathcal C_{p_j}}(A_{ij}(L_i),L_j),
\]
unless additional duality or symmetry assumptions are imposed.
\end{example}

\begin{remark}
\label{rem:triangle-visibility-schematic}
These toy models only illustrate the logic of triangle-visible nonvanishing.  The paper does not claim that every supported channel admits such a triangle directly.  The conditional bridge theorem instead uses the weaker content-and-detector route of Section~\ref{sec:probe-content-detector}.
\end{remark}

\subsection{Specialization examples}

We now illustrate the scalar specializations
\[
w_{ij}^{\mathrm{tot}}=P_{ij}(1),
\qquad
w_{ij}^{\chi}=P_{ij}(-1)
\]
from Definition~\ref{def:specialized-interaction-weights}.  These examples assume the bounded Hom-finite case.

\begin{example}[single-degree interaction]
\label{ex:single-degree-interaction}
Suppose \(\mathsf H_{ij}\) has cohomology concentrated in one degree \(m_0\), with
\[
\dim H^{m_0}(\mathsf H_{ij})=d,
\qquad
d\ge 1.
\]
Then
\[
P_{ij}(q)=d\,q^{m_0},
\qquad
w_{ij}^{\mathrm{tot}}=d,
\qquad
w_{ij}^{\chi}=(-1)^{m_0}d.
\]
The total specialization records size; the Euler specialization records parity-sensitive signed size.
\end{example}

\begin{example}[two-degree interaction]
\label{ex:two-degree-interaction}
Suppose \(\mathsf H_{ij}\) has nonzero cohomology only in degrees \(m\) and \(m+1\), with
\[
\dim H^m(\mathsf H_{ij})=a,
\qquad
\dim H^{m+1}(\mathsf H_{ij})=b.
\]
Then
\[
P_{ij}(q)=a q^m+b q^{m+1},
\]
while
\[
w_{ij}^{\mathrm{tot}}=a+b,
\qquad
w_{ij}^{\chi}=(-1)^m(a-b).
\]
Thus \(P_{ij}(q)\) retains the degree distribution, while \(w_{ij}^{\mathrm{tot}}\) and \(w_{ij}^{\chi}\) collapse it to scalar shadows.
\end{example}

\begin{remark}
\label{rem:specialization-toy-models}
The specialization examples do not enter the proof of the main theorem stack.  They only show how the graded interaction polynomial interpolates between a derived interaction complex and its simplest scalar decategorifications.
\end{remark}

%----------------------------------------------------
\section{Conclusion}
\label{sec:conclusion}

\subsection{Summary of the construction}

This paper constructs the first graded pairwise interaction layer in the finite-node sequence developed in \cite{RahmanQuiverDataI,RahmanQuiverDataII}.  \cite{RahmanQuiverDataI} extracted the state-data package
$A_\Sigma=(V_\Sigma,E_\Sigma,c_\Sigma)$, and \cite{RahmanQuiverDataII} constructed the support-level interaction package $I_\Sigma^{(0/1)}$. The present paper refines that binary support law to a graded interaction package $I_\Sigma^{\mathrm{gr}}$.

The starting point was the limitation of support-level incidence.  The matrix \(I_\Sigma^{(0/1)}\) records whether a mediated channel is present, but not its derived size, cohomological degree, or exact-triangle behavior.  Section~\ref{sec:extension-only-obstruction} showed that the naive extension-only shadow route remains local and does not by itself supply the required off-diagonal transport mechanism.  This motivates the passage to the bulk/localized schober transport composites
\[
\Psi_j\Phi_i:
\mathcal C_{p_i}\to\mathcal C_{p_j}.
\]

The main construction is mediated triangle transport.  An MTT datum packages the bulk triangulated category, localized triangulated sectors, exact transport functors, localized probes, shadow compatibility, and the derived interaction profunctors
\[
\mathbb T_{ij}(X,Y)
=
\RHom_{\mathcal C_{p_j}}(\Psi_j\Phi_i(X),Y).
\]
Evaluating these profunctors on probes gives the pairwise interaction complexes
\[
\mathsf H_{ij}
=
\RHom_{\mathcal C_{p_j}}(\Psi_j\Phi_i(L_i),L_j).
\]
These complexes are the derived pairwise objects replacing support-only interaction.

The exactness results show that MTT preserves distinguished-triangle structure and yields long exact interaction sequences.  The triangle-visibility criterion gives a direct sufficient condition for nonvanishing, while the conditional bridge theorem shows that supported channels yield nonzero interaction complexes under the probe, content, and detector hypotheses.

Finally, under the standing finiteness convention, the cohomology of \(\mathsf H_{ij}\) defines
\[
P_{ij}(q)
=
\sum_m \dim H^m(\mathsf H_{ij})q^m,
\]
and these polynomials assemble into
\[
I_\Sigma^{\mathrm{gr}}
=
(P_{ij}(q)).
\]
Thus the finite output of the sequence through the present paper is
\[
(A_\Sigma,\ I_\Sigma^{(0/1)},\ I_\Sigma^{\mathrm{gr}}).
\]

\subsection{What is deferred}

The paper does not construct the later dynamical theory.  In particular, it does not introduce a complete stability condition, charge-wall chamber package, BPS index, or wall-crossing formula.  Those structures require additional choices and additional theorem-level input beyond the bridge constructed here.

The sequel is expected to add:
\begin{enumerate}
\item a charge lattice or finite charge sector associated with \(A_\Sigma\);
\item stability data on that charge sector;
\item an admissible interaction law using \(I_\Sigma^{(0/1)}\) and \(I_\Sigma^{\mathrm{gr}}\);
\item chamber and wall structure;
\item first BPS indices or wall-crossing shadows attached to the finite-node geometry.
\end{enumerate}

This deferral is deliberate.  The present paper supplies the finite state, support, and graded interaction data that the later stability/BPS formalism should inherit.

\subsection{Outlook}

The package
\[
(A_\Sigma,\ I_\Sigma^{(0/1)},\ I_\Sigma^{\mathrm{gr}})
\]
is the finite algebraic input for the next stage of the program.  It records, respectively, nodewise state data, support-level interaction, and graded pairwise interaction.  The next step is to equip this package with charge, stability, chamber, and wall-crossing structure.

Thus the finite-node conifold geometry does not stop at local extension data or binary support.  Once the bulk/localized transport system is included, it yields a derived pairwise interaction layer and hence a graded interaction matrix.  This is the final bridge needed before formulating the first finite-node stability and BPS theory.

%-----------------------------------
%               Appendix

\appendix

\section{Why binary support is too coarse for later stability and BPS theory}
\label{app:why-binary-support-too-coarse}

This appendix explains why the support-level incidence package of \cite{RahmanQuiverDataII} is necessary but not sufficient for a later stability or BPS formalism.  Binary incidence is the correct first decategorified support layer: it records where mediated channels are present.  What it does not record is the graded, homological, or exact-triangle behavior of those channels.

\subsection{Binary incidence as support-level decategorification}

In \cite{RahmanQuiverDataII}, the finite-node schober datum determines a support-level interaction package with binary decategorification
\[
I_\Sigma^{(0/1)}.
\]
This matrix records whether a mediated interaction channel is present.  It therefore captures the support of the pairwise transport pattern determined by the bulk/localized functors, but it does not attach graded multiplicity, higher-degree interaction data, or exact-triangle transport data.

Thus \(I_\Sigma^{(0/1)}\) answers the question of channel presence.  It does not yet answer the stronger questions of interaction size, cohomological degree, or behavior under extensions and decompositions.  In this sense, the binary incidence package is the support of a later interaction law, but not the full law itself.

\subsection{What binary support forgets}

The matrix \(I_\Sigma^{(0/1)}\) deliberately forgets several kinds of information.

First, it forgets graded size.  Two mediated channels may both have binary value \(1\), even if one carries a small interaction complex and the other carries a richer derived object.

Second, it forgets homological degree.  A later stability or BPS theory should distinguish a channel concentrated in degree \(0\) from one spread across several cohomological degrees.  Binary support cannot make that distinction.

Third, it forgets exact-triangle transport.  The finite-node schober package contains exact functors between localized sectors and the bulk sector.  After passing only to binary support, one loses the long exact interaction sequences produced by distinguished triangles.

Fourth, it forgets extension-visible transport phenomena.  The corrected finite-node extension package of \cite{RahmanPerverseNearbyCycles,RahmanMixedHodgeModules} shows that local sectors are coupled through a common global extension architecture.  Binary support records where transport is allowed, but not how such transport appears through cones, extensions, or derived comparison objects.

For these reasons, binary support is indispensable as a support skeleton but incomplete as an interaction law.

\subsection{Why later BPS theory needs more than support}

A later stability or BPS theory is expected to require at least four ingredients:
\begin{enumerate}
\item a finite charge sector or charge lattice built from the finite-node state data;
\item an admissibility law governing which sectors may interact;
\item a pairwise interaction law distinguishing channels beyond mere presence;
\item a mechanism controlling how interaction behaves under decomposition, extension, or chamber change.
\end{enumerate}

The support-level package of \cite{RahmanQuiverDataII} supplies the second ingredient: it determines the support of admissible interaction.  It does not supply the third or fourth.  In particular, wall-crossing data cannot be recovered from support alone, because support does not determine graded interaction size, derived visibility, or exact transport behavior.

Accordingly, the relationship should be understood as a refinement:
\[
I_\Sigma^{(0/1)}
\quad\leadsto\quad
I_\Sigma^{\mathrm{gr}}.
\]
The binary package specifies where interaction is allowed.  The graded package records the cohomological size and degree distribution of the corresponding interaction.

\subsection{Why derived pairwise objects are introduced}

The present paper replaces support-only interaction by derived pairwise interaction objects.  Once localized probes
\[
L_i\in\mathcal C_{p_i}
\]
are fixed, the mediated transport from node \(i\) to node \(j\) is encoded by
\[
\mathsf H_{ij}
=
\RHom_{\mathcal C_{p_j}}(\Psi_j\Phi_i(L_i),L_j).
\]
This object retains the information lost by binary support:
\begin{itemize}
\item it is pairwise and directed;
\item it records all cohomological degrees;
\item it is compatible with exact triangles through mediated triangle transport;
\item it admits graded shadows such as
\[
P_{ij}(q)
=
\sum_m \dim H^m(\mathsf H_{ij})q^m.
\]
\end{itemize}

The graded interaction matrix
\[
I_\Sigma^{\mathrm{gr}}
=
(P_{ij}(q))
\]
is therefore the first non-binary refinement of the support law of \cite{RahmanQuiverDataII}.  It does not replace binary support.  It upgrades it to the first level at which later stability and BPS constructions can use graded pairwise interaction data.

\subsection{Conceptual summary}

The passage from \cite{RahmanQuiverDataII} to the present paper is:
\[
\begin{tikzcd}[column sep=large]
\text{binary support}
\arrow[r]
&
\text{derived pairwise interaction}
\arrow[r]
&
\text{graded interaction shadow}.
\end{tikzcd}
\]
The first step is needed because binary support is too coarse to encode graded or exact interaction data.  The second step is needed because later stability and BPS theory require a finite pairwise interaction law richer than mere support.

Thus the support-level incidence package of \cite{RahmanQuiverDataII} is the correct first floor, but not the terminal interaction object.  The present paper supplies the next floor: the first derived and graded pairwise interaction layer that a later stability and wall-crossing theory can inherit.

%-----------------------------------------

\section{A brief note on profunctors and derived interaction profunctors}
\label{app:profunctors}

This appendix fixes the profunctor notation used in the main text.  The classical notion of a profunctor, also called a distributor, goes back to Bénabou \cite{BenabouProfunctors}.  We only use the elementary variance pattern and its derived analogue.

\subsection{Basic definition}

Let \(\mathcal A\) and \(\mathcal B\) be categories.  A \emph{profunctor} from \(\mathcal A\) to \(\mathcal B\) is a functor
\[
P:
\mathcal A^{op}\times\mathcal B
\longrightarrow
\mathbf{Set}.
\]
Thus \(P\) assigns to each pair
\[
A\in\mathcal A,
\qquad
B\in\mathcal B
\]
a set \(P(A,B)\) of generalized morphisms from \(A\) to \(B\).  In enriched or linear settings, the target \(\mathbf{Set}\) may be replaced by another category \(\mathcal V\).  In the present paper, the relevant target is the derived linear category \(D(\mathrm{Vect})\).

\subsection{The profunctor associated to a functor}

If
\[
F:\mathcal A\to\mathcal B
\]
is a functor, then
\[
(A,B)
\longmapsto
\Hom_{\mathcal B}(F(A),B)
\]
defines a profunctor from \(\mathcal A\) to \(\mathcal B\).  A morphism \(A'\to A\) gives, by precomposition, a map
\[
\Hom_{\mathcal B}(F(A),B)
\longrightarrow
\Hom_{\mathcal B}(F(A'),B),
\]
so the construction is contravariant in the \(\mathcal A\)-variable.  A morphism \(B\to B'\) gives, by postcomposition, a map
\[
\Hom_{\mathcal B}(F(A),B)
\longrightarrow
\Hom_{\mathcal B}(F(A),B'),
\]
so it is covariant in the \(\mathcal B\)-variable.

\subsection{Derived interaction profunctors}

In a triangulated or stable categorical setting, Hom-sets are replaced by derived Hom objects.  If
\[
F:\mathcal A\to\mathcal B
\]
is an exact functor between triangulated categories, the corresponding derived assignment is
\[
(A,B)
\longmapsto
\RHom_{\mathcal B}(F(A),B).
\]
This has the same variance pattern as the ordinary profunctor: contravariant in \(A\) and covariant in \(B\).  The target is a derived linear category such as \(D(\mathrm{Vect})\).  Exactness of \(F\), together with exactness properties of derived Hom, is what produces the distinguished-triangle behavior used in Section~\ref{sec:exactness-triangle-transport}; see also standard references on triangulated and stable categorical exactness \cite{KS,DimcaSheaves,CDW}.

\subsection{Specialization to mediated triangle transport}

In the main text, the finite-node schober datum supplies exact transport functors
\[
\Phi_i:
\mathcal C_{p_i}
\to
\mathcal C_{\mathrm{bulk}},
\qquad
\Psi_j:
\mathcal C_{\mathrm{bulk}}
\to
\mathcal C_{p_j}.
\]
Their composite is
\[
A_{ij}
:=
\Psi_j\Phi_i:
\mathcal C_{p_i}
\to
\mathcal C_{p_j}.
\]
The associated MTT interaction profunctor is
\[
\mathbb T_{ij}(X,Y)
:=
\RHom_{\mathcal C_{p_j}}
\bigl(A_{ij}(X),Y\bigr),
\]
or equivalently
\[
\mathbb T_{ij}:
\mathcal C_{p_i}^{op}
\times
\mathcal C_{p_j}
\to
D(\mathrm{Vect}).
\]
Thus \(\mathbb T_{ij}\) is exactly the derived profunctor associated to the exact transport functor \(A_{ij}\).  The new point in this paper is the use of such derived profunctors to encode bulk-mediated pairwise interaction between localized sectors of a finite-node conifold degeneration.

\subsection{Probe evaluation}

For localized probes
\[
L_i\in\mathcal C_{p_i},
\qquad
L_j\in\mathcal C_{p_j},
\]
the probe evaluation is
\[
\mathsf H_{ij}
:=
\mathbb T_{ij}(L_i,L_j)
=
\RHom_{\mathcal C_{p_j}}
\bigl(\Psi_j\Phi_i(L_i),L_j\bigr).
\]
This is the derived pairwise interaction object used throughout the paper.  Under the finiteness convention of Convention~\ref{conv:mtt-finiteness}, its graded shadow is
\[
P_{ij}(q)
=
\sum_m
\dim H^m(\mathsf H_{ij})q^m.
\]

\subsection{Summary}

For the main text, the only points needed are:
\begin{enumerate}
\item a profunctor from \(\mathcal A\) to \(\mathcal B\) is a bifunctor
\[
\mathcal A^{op}\times\mathcal B\to\mathcal V;
\]
\item a functor \(F:\mathcal A\to\mathcal B\) gives the profunctor
\[
(A,B)
\longmapsto
\Hom_{\mathcal B}(F(A),B);
\]
\item in the triangulated setting, this becomes
\[
(A,B)
\longmapsto
\RHom_{\mathcal B}(F(A),B);
\]
\item the MTT interaction profunctors are obtained by taking
\[
F=A_{ij}=\Psi_j\Phi_i.
\]
\end{enumerate}

%=================================================
%
%                    BIBLIOGRAPHY
%

\printbibliography

\end{document}